\newcommand{\rr}{\mathbb{R}}
\newcommand{\ee}{\mathbb{E}}
\newcommand{\ff}{\mathbb{F}}
\newcommand{\nn}{\mathbb{N}}
\newcommand{\pp}{\mathbb{P}}
\newcommand{\qq}{\mathbb{Q}}
\newcommand{\eps}{\varepsilon}
\newcommand{\pspace}{(\Omega,\mathcal{F},\pp)}
\newcommand{\dd}{{\mathrm d} }
\newcommand{\one}{{\bf 1}}
\newcommand{\cf}{\mathcal{F}}
\newcommand{\bq}{\mathbf{Q}}
\newcommand{\ce}{\mathcal{E}}
\newcommand{\cc}{\mathbb{C}}
\newcommand{\bnu}{\bar{\nu}}
\newcommand{\bzeta}{\bar{\zeta}}
\newcommand{\diag}{\mathrm{diag}}
\newcommand{\ii}{\mathrm{i}}
\def\newrmtheorem#1{\@ifnextchar[{\@rmothm{#1}}{\@rmnthm{#1}}}
\def\@rmnthm#1#2{%
\@ifnextchar[{\@rmxnthm{#1}{#2}}{\@rmynthm{#1}{#2}}}
\def\@rmxnthm#1#2[#3]{\expandafter\@ifdefinable\csname #1\endcsname
{\@definecounter{#1}\@addtoreset{#1}{#3}%
\expandafter\xdef\csname the#1\endcsname{\expandafter\noexpand
  \csname the#3\endcsname \@rmthmcountersep \@rmthmcounter{#1}}%
\global\@namedef{#1}{\@rmthm{#1}{#2}}\global\@namedef{end#1}{\@endrmtheorem}}}
\def\@rmynthm#1#2{\expandafter\@ifdefinable\csname #1\endcsname
{\@definecounter{#1}%
\expandafter\xdef\csname the#1\endcsname{\@rmthmcounter{#1}}%
\global\@namedef{#1}{\@rmthm{#1}{#2}}\global\@namedef{end#1}{\@endrmtheorem}}}
\def\@rmothm#1[#2]#3{\expandafter\@ifdefinable\csname #1\endcsname
  {\global\@namedef{the#1}{\@nameuse{the#2}}%
\global\@namedef{#1}{\@rmthm{#2}{#3}}%
\global\@namedef{end#1}{\@endrmtheorem}}}
\def\@rmthm#1#2{\refstepcounter
    {#1}\@ifnextchar[{\@rmythm{#1}{#2}}{\@rmxthm{#1}{#2}}}
\def\@rmxthm#1#2{\@beginrmtheorem{#2}{\csname the#1\endcsname}\ignorespaces}
\def\@rmythm#1#2[#3]{\@opargbeginrmtheorem{#2}{\csname
       the#1\endcsname}{#3}\ignorespaces}
\def\@rmthmcounter#1{\noexpand\arabic{#1}}
\def\@rmthmcountersep{}% in plaats van {.}
\def\@beginrmtheorem#1#2{\rm \trivlist
      \item[\hskip \labelsep{\bf #1\ #2\thmrmcounterend}]}
\def\@opargbeginrmtheorem#1#2#3{\rm \trivlist
      \item[\hskip \labelsep{\bf #1\ #2\ (#3)\thmrmcounterend}]}
\def\@endrmtheorem{\endtrivlist}
\def\thmrmcounterend{\hskip 0em\relax} % . removed
\def\newrmwntheorem#1#2{\expandafter\@ifdefinable\csname #1\endcsname%
\global\@namedef{#1}{\@rmwnthm{#1}{#2}}%
\global\@namedef{end#1}{\@endrmwntheorem}}
\def\@rmwnthm#1#2{\@ifnextchar[{\@rmwnythm{#1}{#2}}{\@rmwnxthm{#1}{#2}}}
\def\@rmwnxthm#1#2{\@beginrmwntheorem{#2}\ignorespaces}
\def\@rmwnythm#1#2[#3]{\@opargbeginrmwntheorem{#2}{#3}\ignorespaces}
\def\@beginrmwntheorem#1{\rm \trivlist
      \item[\hskip \labelsep{\bf #1\thmrmwncounterend}]}
\def\@opargbeginrmwntheorem#1#2{\rm \trivlist
      \item[\hskip \labelsep{\bf #1\ (#2)\thmrmwncounterend}]}
\def\@endrmwntheorem{\endtrivlist}
\def\thmrmwncounterend{.\hskip 1em\relax}
\def\newsltheorem#1{\@ifnextchar[{\@slothm{#1}}{\@slnthm{#1}}}
\def\@slnthm#1#2{%
\@ifnextchar[{\@slxnthm{#1}{#2}}{\@slynthm{#1}{#2}}}
\def\@slxnthm#1#2[#3]{\expandafter\@ifdefinable\csname #1\endcsname
{\@definecounter{#1}\@addtoreset{#1}{#3}%
\expandafter\xdef\csname the#1\endcsname{\expandafter\noexpand
  \csname the#3\endcsname \@slthmcountersep \@slthmcounter{#1}}%
\global\@namedef{#1}{\@slthm{#1}{#2}}\global\@namedef{end#1}{\@endsltheorem}}}
\def\@slynthm#1#2{\expandafter\@ifdefinable\csname #1\endcsname
{\@definecounter{#1}%
\expandafter\xdef\csname the#1\endcsname{\@slthmcounter{#1}}%
\global\@namedef{#1}{\@slthm{#1}{#2}}\global\@namedef{end#1}{\@endsltheorem}}}
\def\@slothm#1[#2]#3{\expandafter\@ifdefinable\csname #1\endcsname
  {\global\@namedef{the#1}{\@nameuse{the#2}}%
\global\@namedef{#1}{\@slthm{#2}{#3}}%
\global\@namedef{end#1}{\@endsltheorem}}}
\def\@slthm#1#2{\refstepcounter
    {#1}\@ifnextchar[{\@slythm{#1}{#2}}{\@slxthm{#1}{#2}}}
\def\@slxthm#1#2{\@beginsltheorem{#2}{\csname the#1\endcsname}\ignorespaces}
\def\@slythm#1#2[#3]{\@opargbeginsltheorem{#2}{\csname
       the#1\endcsname}{#3}\ignorespaces}
\def\@slthmcounter#1{.\noexpand\arabic{#1}}
\def\@slthmcountersep{}
\def\@beginsltheorem#1#2{\sl \trivlist
      \item[\hskip \labelsep{\bf #1\ #2\thmslcounterend}]}
\def\@opargbeginsltheorem#1#2#3{\sl \trivlist
      \item[\hskip \labelsep{\bf #1\ #2\ (#3)\thmslcounterend}]}
\def\@endsltheorem{\endtrivlist}
\def\thmslcounterend{\hskip 0em\relax}
\def\newslwntheorem#1#2{\expandafter\@ifdefinable\csname #1\endcsname%
\global\@namedef{#1}{\@slwnthm{#1}{#2}}%
\global\@namedef{end#1}{\@endslwntheorem}}
\def\@slwnthm#1#2{\@ifnextchar[{\@slwnythm{#1}{#2}}{\@slwnxthm{#1}{#2}}}
\def\@slwnxthm#1#2{\@beginslwntheorem{#2}\ignorespaces}
\def\@slwnythm#1#2[#3]{\@opargbeginslwntheorem{#2}{#3}\ignorespaces}
\def\@beginslwntheorem#1{\sl \trivlist
      \item[\hskip \labelsep{\bf #1\thmslwncounterend}]}
\def\@opargbeginslwntheorem#1#2{\sl \trivlist
      \item[\hskip \labelsep{\bf #1\ (#2)\thmslwncounterend}]}
\def\@endslwntheorem{\endtrivlist}
\def\thmslwncounterend{.\hskip 1em\relax}
\newenvironment{proof}[1][\proofname]{\par \normalfont \trivlist
 \item[\hskip\labelsep\itshape #1]\ignorespaces
}{%
 \hspace*{\fill}$\Box$ \endtrivlist
}
\newcommand{\proofname}{{\bf Proof}}
\begin{document}

\title{Explicit computations for some Markov modulated counting processes}

\author{Michel Mandjes \and Peter Spreij}

%\subjclass[2000]{Primary: , Secondary:}

%\date{\today}

\maketitle

\begin{abstract}
In this paper we present elementary computations for some \emph{Markov modulated} counting processes, also called counting processes with \emph{regime switching}. Regime switching has become an increasingly popular concept in many branches of science. In finance, for instance, one could identify the background process with the `state of the economy', to which asset prices react, or as an identification of the varying default rate of an obligor. 
The key feature of the counting processes in this paper is that their intensity processes are functions of a finite state Markov chain. This kind of processes can be used to model default events of some companies. 

Many quantities of interest in this paper, like conditional characteristic functions, can all be derived from conditional probabilities, which can, in principle, be \emph{analytically} computed. 
We will also study limit results for models with rapid switching, which occur when inflating the intensity matrix of the Markov chain by a factor tending to infinity. 
The paper is largely expository in nature, with a didactic flavor. 
\smallskip\\
{\sl Keywords:} Counting process, Markov chain, Markov modulated process, Regime switching.
\smallskip\\
{\sl AMS subject classification:} 60G44, 60G55, 60J27.
\end{abstract}

\section{Introduction}

In this paper we present some elementary computations concerning some \emph{Markov modulated} (MM) counting processes, denoted $N$, also called counting processes with \emph{regime switching}. Such processes fall into the class of \emph{hybrid models}~\cite{YinZhu} and are in fact Hidden Markov processes~\cite{EAM}. Although in the present paper we restrict ourselves to certain counting processes, it is worth mentioning that owing to its various attractive features, regime switching has become an increasingly popular concept in many branches of science.  In a broad spectrum of application domains it offers a  natural framework for modeling 
situations in which the stochastic process under study reacts to 
an autonomously evolving environment. In finance, for instance, one could identify the background process with the `state of the economy', to which asset prices react, or as an identification of the varying default rate of an obligor. In operations research, in particular in wireless networks, the concept can be used to model the channel conditions that vary in time, and to which users react. In the literature in the latter field there is a sizeable body of work on Markov-modulated queues, see e.g.\ \cite[Ch.\ XI]{ASM} and \cite{NEUTS}, while Markov modulation has been intensively used in insurance and risk theory as well \cite{AA}. In the economics literature, the use of regime switching dates back to at least the late 1980s \cite{HAM}. Various specific models have been considered since then, see for instance \cite{ANG,EMAM,ESIU}. For other direct applications of models with regime switching in finance (hedging of claims, interest rate models, credit risk, application to pension funds) we refer to \cite{chen,JP2008,JP2012,Yin2009,ZhouYin2003} for recent results.

The key feature of the counting processes, commonly denoted $N$, in this paper is that their intensity processes are of the form $\lambda_t=\lambda(X_t,N_t)$, where $X$ is a finite state Markov chain whose jumps with probability one never coincide with the jumps of the counting process. For mathematical convenience we assume without loss of generality that $X$ takes its values in the set of $d$-dimensional basis vectors.

This kind of processes can be used to model default events of some companies. We restrict our treatment to models where the intensity is of a special form, leading to the MM one point process which can be used to model the default event of a single company, its extension to the situation of defaults of various companies and an MM Poisson process, which can be used to model defaults for a large pool of obligors whose individual intensities of default are all the same and small.

The intensities $\lambda_t=\lambda(X_t,N_t)$ that we use will be affine in $X_t$, i.e.\ $\lambda_t=\lambda^\top X_tf(N_t)$ for some $\lambda\in\rr^d$ and some function $f$. It is possible to show that the joint process $(X,N)$ is Markov, in fact it is an affine process after a state transformation. This means that for many quantities of interest, like conditional characteristic functions, one can in principle use the full technical apparatus that has become available for affine process, see \cite{dfs}. However, as these quantities can all be derived from conditional probabilities (our processes are finite, or at most countably, valued), using these techniques is like making a detour since the conditional probabilities can be derived by more straightforward methods. Moreover these conditional probabilities give a \emph{direct} insight into the probabilistic structure of the process and can in principle be \emph{analytically} computed. Therefore, we circumvent the theory of affine processes and focus on direct computation of all conditional probabilities of interest.

We will also study limit results for models with rapid switching, which occur when inflating the intensity matrix of the Markov chain by a factor tending to infinity. Rapid switching between (macro) economic states is unrealistic, but in models for the profit and loss of trading positions, especially in high frequency trading, rapid switching may take place, see~\cite{graziano}.
We will see that the limit processes have intensities that are expectations under the invariant distribution of the chain. This is similar to what happens in the context of Markov modulated Ornstein-Uhlenbeck processes~\cite{HuangMandjesSpreij}, see also \cite{quintet}, whereas comparable results under scaling in the operations research literature can be found in  \cite{BMT} and \cite{BKMT}.

The paper is largely expository in nature, with a didactic flavor. We do not claim novelty of all results below. Rather we emphasize the uniform approach that we follow, using martingale methods, that may also lead to alternative proofs of known results, e.g.\ those concerning transition probabilities by using `$\eps$-arguments' as in \cite{NEUTS}.
The organization of the paper is as follows. In Section~\ref{section:mm} we study Markov modulated model for the total number of defaults when there are $n$ obligors. As a primer, in Section~\ref{section:mm0} we extensively study the Markov modulated model for a single obligor, in particular its distributional properties. 
Then we switch to the more general situation of Section~\ref{section:mmmultiple}, where our approach is inspired by the easier case of the previous section. All results are basically obtained by exploiting the Markovian nature of the joint process $(X,N)$. Section~\ref{section:mmpoisson} gives a few results for the Markov modulated Poisson process. Conditional probabilities of future values of the counting processes, when only its own past can be observed (and not the underlying Markov chain) can be computed using filtering theory, which is the topic of Section~\ref{section:filtering}. In Section~\ref{section:rapid} we obtain the limit results for processes where the Markov chain is rapidly switching.

\section{The MM model for multiple obligors}\label{section:mm}

We assume throughout that a probability space $\pspace$ is given. 
Suppose we have $n$ obligors with default times $\tau^i$ for obligor $i$, $i=0,\ldots,n$. Let $Y^i_t=\one_{\{\tau^i\leq t\}}$, $t\in [0,\infty)$. Here we encounter the canonical set-up for the intensity based approach in credit risk modelling, see \cite[Chapter~12]{filipovic} or \cite[Chapter~6]{br} for further details on probabilistic aspects. 
We postulate for each $i\in\{1,\ldots,n\}$
\begin{equation}\label{eq:yi}
\dd Y^i_t= \lambda_t(1-Y^i_t)\,\dd t + \dd m^i_t,
\end{equation}
for $\lambda_t$ a nonnegative process to be specified, but which is the same for each obligor $i$. Here each $m^i$ is a martingale w.r.t.\ to the filtration, call it $\ff^i$, generated by $Y^i$ and the process $\lambda$.
We impose that the $\tau_i$ are conditionally independent given $\lambda$. Hence, simultaneous defaults occur with probability zero, as the $\tau^i$ have a continuous distribution. By the conditional independence assumption, the $m^i$ are also martingales w.r.t. $\ff=\vee_{i=1}^n\ff^i$. 
The process $\lambda$ is assumed to be predictable w.r.t.\ $\ff$. In all what follows in this section we take $N_t=\sum_{i=1}^nY^i_t$.

\subsection{The MM one point process}\label{section:mm0}

For a better understanding of what follows, we single out the special case $n=1$ and we write $\tau$ instead of $\tau^1$.
There is some advantage in starting with a simpler case that allows for more explicit formulas, is more transparent,  and that at the same time can serve as a warming up for the more general setting.

\subsubsection{The general one point process with intensity}\label{section:inhomo}

Let us consider the basic case, the random variable $\tau$ has an exponential distribution with parameter $\lambda$, and $Y_t=\one_{\{\tau\leq t\}}$, $t\in [0,\infty)$. Then $Y$ has semimartingale decomposition 
\begin{equation}\label{eq:y1}
\dd Y_t = \lambda(1-Y_t)\,\dd t+\dd m_t,
\end{equation}
where $\lambda >0$ and $m$ a martingale w.r.t.\ the filtration generated by the process $Y$. As a matter of fact, the distributional property of $\tau$ is equivalent to the decomposition of $Y$ in \eqref{eq:y1}. Clearly $Y_t$ is a Bernoulli random variable, so $y(t):=\ee Y_t=\pp(Y_t=1)=\pp(\tau\leq t)$. Alternatively, taking expectations, we get the ODE
\[
\dot{y}(t)=\lambda (1-y(t)),
\]
which is, with $y(0)=0$, indeed solved by
\[
y(t)=1-\exp(-\lambda t).
\]
Let $\Lambda^\tau$ be the compensator of $Y$, then
\[
\Lambda^\tau_t=\int_0^t\lambda(1-Y_s)\,\dd s = \int_0^t\lambda\one_{\{s<\tau\}}\,\dd s = \int_0^{t\wedge \tau}\lambda \,\dd s=\lambda (\tau\wedge t).
\]
Note that $Y$ can be considered as $N^\tau$, the at $\tau$ stopped Poisson process   with intensity $\lambda$. The compensator $\Lambda$ of $N$ stopped at $\tau$ indeed yields $\Lambda^\tau$. 
\medskip\\
As a first generalization we change the above setup in the sense that we postulate
\begin{equation}\label{eq:y2}
\dd Y_t = \lambda_t(1-Y_t)\,\dd t+\dd m_t,
\end{equation}
where $\lambda$ is a nonnegative locally integrable Borel function, also known as the (time varying) hazard rate. As above one can show that
\[
y(t)=1-\exp(-\int_0^t\lambda_s\,\dd s).
\]
In a next generalization we suppose that $\lambda$ becomes a random process defined on an auxiliary probability space $(\Omega',\cf',\pp')$. We can look at the product probability space $(\Omega\times\Omega',\cf\otimes\cf',\pp\otimes\pp')$ and redefine in the obvious way $Y$, $\tau$ and $\lambda$ on this product space. The filtration we will use consists of the $\sigma$-algebras $\cf^Y_t\otimes\cf^\lambda_t$.

It is assumed that $\lambda$ is predictable and a.s. locally integrable w.r.t.\ Lebesgue measure. For a given trajectory $\lambda_t=\lambda_t(\omega')$ we define $Y$ on $\pspace$ as in \eqref{eq:y2}. With $\cf^\lambda$ the $\sigma$-algebra generated by the full process $\lambda$, we have that 
\[
\ee[Y_t|\cf^\lambda]=1-\exp(-\int_0^t\lambda_s\,\dd s),
\]
and hence
\[
y(t)=\ee Y_t=1-\ee\exp(-\int_0^t\lambda_s\,\dd s).
\]
Alternatively, one can construct the point process $Y$ as follows. Let $(\Omega, \cf, \qq)$ be a probability space on which is defined a standard Poisson process $Y$ and \emph{independently} of $Y$ the nonnegative predictable process $\lambda$. Put $L_t=\ce(\mu)_t$, the Dol\'eans exponential of the $\qq$-local martingale $\mu$ given by $\mu_t=\int_0^t (\lambda_s\one_{\{Y_{s-}=0\}}-1)\,\dd (Y_s-s)$. Note that $L_0=1$.  Let $\tau_k$ be the consecutive jump times of $Y$, $\tau_0=0$. Note that the differences $\tau_k-\tau_{k-1}$ have a standard exponential distribution under $\qq$. The assertion of the following lemma is a variation on Equation~(4.23) in \cite{br}.
\begin{lemma}
The density process $L$ allows the following explicit expression,
\[
L_t=(\lambda_{\tau_1})^{Y_t}\exp(t-\int_0^{\tau_1\wedge t}\lambda_s\,\dd s)\one_{\{Y_t\leq 1\}}.
\] 
If $\lambda$ is a bounded process, $L$ is a martingale, hence $\ee L_t=L_0=1$.
\end{lemma}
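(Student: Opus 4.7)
The plan is to apply the Dol\'eans--Dade exponential formula to $\mu$ directly and simplify using the very particular structure of its integrand. Write $H_s := \lambda_s \one_{\{Y_{s-}=0\}} - 1$, so that
\[
\mu_t = \int_0^t H_s\, \dd Y_s - \int_0^t H_s\, \dd s.
\]
Because $Y$ is a rate-$1$ Poisson process under $\qq$, $\mu$ has no continuous martingale part, and the Dol\'eans--Dade formula collapses to
\[
L_t = \exp\!\Bigl(-\int_0^t H_s\, \dd s\Bigr) \prod_{s \leq t}(1 + \Delta \mu_s),
\]
where the product runs over the jump times of $Y$.

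The crucial observation concerns those jumps. At the $k$-th jump time $\tau_k$ we have $\Delta \mu_{\tau_k} = \lambda_{\tau_k}\one_{\{Y_{\tau_k -}=0\}} - 1$, and $\one_{\{Y_{\tau_k -}=0\}}$ equals $1$ only for $k=1$. Hence $1 + \Delta\mu_{\tau_1} = \lambda_{\tau_1}$, while $1 + \Delta\mu_{\tau_k} = 0$ for every $k \geq 2$. Consequently $L_t$ vanishes identically on $\{t \geq \tau_2\} = \{Y_t \geq 2\}$, which accounts for the indicator $\one_{\{Y_t \leq 1\}}$ in the claimed formula.

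Next I would evaluate the exponent on the two surviving events. On $\{t < \tau_1\}$ we have $H_s = \lambda_s - 1$ for every $s \leq t$, so the exponent is $t - \int_0^t \lambda_s\, \dd s$ and there are no jump factors, giving $L_t = \exp(t - \int_0^t \lambda_s\,\dd s)$. On $\{\tau_1 \leq t < \tau_2\}$ the integrand $H_s$ equals $\lambda_s - 1$ on $[0,\tau_1]$ and $-1$ on $(\tau_1, t]$, yielding exponent $t - \int_0^{\tau_1} \lambda_s \,\dd s$; multiplying by the single surviving jump factor $\lambda_{\tau_1}$ recovers the formula. Combining both cases and using $Y_t \in \{0, 1\}$ on $\{Y_t \leq 1\}$ produces the unified expression.

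Finally, the martingale assertion. Since $L = \ce(\mu)$ is automatically a $\qq$-local martingale, it suffices to upgrade it to a true martingale. Boundedness $\|\lambda\|_\infty \leq M$ combined with the explicit formula gives $|L_t| \leq \max(1, M)\,e^t$, which is uniformly bounded on each compact time interval; a local martingale that is bounded on compacts is a true martingale, and $\ee L_t = L_0 = 1$ follows. The only delicate bookkeeping is the clean separation of $\mu$ into its continuous finite variation and pure jump components before invoking Dol\'eans--Dade; no step presents a serious obstacle.
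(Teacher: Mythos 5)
Your proof is correct, and it in fact supplies the computation that the paper's own proof waves away as ``an elementary, but slightly tedious computation.'' Your route --- writing $\ce(\mu)_t=\exp(-\int_0^t H_s\,\dd s)\prod_{s\le t}(1+\Delta\mu_s)$ for the finite-variation local martingale $\mu$, observing that $1+\Delta\mu_{\tau_1}=\lambda_{\tau_1}$ while $1+\Delta\mu_{\tau_k}=0$ for $k\ge 2$, and then evaluating the exponent separately on $\{t<\tau_1\}$ and $\{\tau_1\le t<\tau_2\}$ --- is exactly the computation the authors have in mind, and your bookkeeping (the cancellation of the $e^{-\Delta\mu_s}$ factors, the sign of $-\int_0^t H_s\,\dd s$, the role of the indicator $\one_{\{Y_t\le 1\}}$) checks out. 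Where you genuinely diverge is in the martingale upgrade: the paper bounds $\ee\int_0^t L_s^2\,\dd s\le C\exp(2t)$ and concludes that the stochastic integral $L=1+\int_0^\cdot L_{s-}\,\dd\mu_s$ is a square-integrable martingale, whereas you read off the deterministic bound $|L_s|\le\max(1,M)\,e^t$ for $s\le t$ directly from the explicit formula and invoke the fact that a local martingale dominated on compacts by a constant is a true (indeed uniformly integrable on each $[0,t]$) martingale. Both arguments are standard and valid; yours is the more economical once the explicit formula is in hand, since the domination is immediate, while the paper's $L^2$ argument has the advantage of not requiring the closed form at all. No gaps.
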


\begin{proof}
By construction, $L$ is a local martingale. For bounded $\lambda$ we have $\ee \int_0^t L_s^2\,\dd s\leq C\exp(2t)$ for some constant $C$, which yields $L$ a square integrable martingale. The given expression for $L_t$ can be verified by an elementary, but slightly tedious computation.
\end{proof}
Under the assumption that $L$ is a martingale (guaranteed for bounded $\lambda$), by Girsanov's theorem, see \cite[Chapter~VI, T3 and T4]{bremaud}, we can define for every $T>0$ a probability $\pp$ on $(\Omega,\cf_T)$ such that 
\[
m_t:=Y_t-t-\langle Y,\mu\rangle_t=Y_t-\int_0^t\lambda_s\one_{\{Y_{s-}=0\}}\,\dd s
\]
is a local martingale under $\pp$. Note that $\pp(Y_T>1)=\ee_\qq\one_{\{Y_T>1\}}L_T=0$. Hence, under $\pp$ we have $\one_{\{Y_s=0\}}=1-Y_s$ and the expression for $m_t$ coincides with \eqref{eq:y2} for $t\leq T$. Note that $L$ cannot be uniformly integrable, since $L_\infty=0$, which follows from $L_{\tau_2}=0$. Hence it is not automatic that one can define a probability $\pp$ on $(\Omega,\cf)$ such that $m$ is a martingale on $[0,\infty)$. Note that the laws under $\pp$ and $\qq$ of $\lambda$ are the same.

\subsubsection{The one point process with MM intensity}\label{section:mm1}

In this section we consider \eqref{eq:y2}, where we specify $\lambda_t$ as a function of a finite state Markov chain $X_t$, i.e.\ $\lambda_t=\lambda(X_t)$. We see that, trivial cases excluded, unlike the constant hazard rate $\lambda$ in \eqref{eq:y1}, we now have a rate that assumes different values according to the states of the Markov chain. We thus have a rate that is subject to \emph{regime switching}, one also says that we have a \emph{Markov modulated} rate. In order to pose a precise mathematical model, we make some conventions. Let $d$ be the size of the state space of the Markov chain $X$. Then w.l.o.g.\ we may assume that $X$ takes its values in the set $\{e_1,\ldots,e_d\}$ of $d$-dimensional standard basis vectors. This implies that any function of $X_t$ can be written as a linear map of $X_t$, in particular $\lambda(X_t)=\lambda^\top X_t$, where on the right hand side $\lambda$ is a vector in $\rr^d_+$. 

Let $Q$ be the transition matrix of $X$, for which we use the convention that $Q_{ij}$ for $i\neq j$ is the intensity of a transition from state $j$ to state $i$. As a consequence the \emph{column sums} of $Q$ are equal to zero. We then have
\[
\dd X_t=QX_t\,\dd t+ \dd M^X_t,
\]
where $M^X$ a martingale with values in $\rr^d$. We also assume that $Q$ is irreducible and we denote by $\pi$ the vector representing the invariant distribution. 

Furthermore it will be throughout assumed that $Y$ and $X$ have no simultaneous jumps, hence the quadratic variation process $[X,Y]$ ($[X,Y]_t=\sum_{s\leq t}\Delta X_s\Delta Y_s$) is identically zero. 
\medskip\\
For the single obligor case, we pose the following model with regime switching,
\[
\dd Y_t= \lambda^\top X_t(1-Y_t)\,\dd t + \dd m_t,
\]
where $\lambda\in\rr^d_+$. 

One way of constructing this model is by realizing it on a product space with $\lambda_t=\lambda^\top X_t$ as in Section~\ref{section:inhomo}.
Alternatively, one can realize $Y$ as standard Poisson process and independently of it, $X$ as a Markov chain on the auxiliary space under $\qq$. By independence, one has $[X,Y]=0$ under $\qq$ and as these brackets remain the same under an absolutely continuous change of measure using the $\qq$-martingale $\mu$ of the previous section, we are then guaranteed to have $[X,Y]=0$ under $\pp$ as well. In this case it is possible to have $\pp$ defined on $(\Omega,\cf)$ for $\cf=\cf_\infty$, where we use the filtration generated by $Y$ and $X$. As a side remark we note that $\pp$ will not be absolutely continuous w.r.t.\ $\qq$ on $\cf_\infty$.
\medskip\\
In all what follows in this paper we adopt the following \emph{Convention: we will use the generic notation $M$ for a martingale, possibly even of varying dimensions, whose precise form is not important.} 
\medskip\\
An important role will be played by the matrices $Q_{k\lambda}:=Q-k\,\diag(\lambda)$ for $k\geq 0$. Here $\diag(\lambda)$ is the diagonal matrix with $ii$-element equal to $\lambda_i$.  
Here is a, possibly known, stability result for the matrix $Q_\lambda$ (we take $k=1$, but a similar result is obviously true for all positive $k$).
\begin{lemma}\label{lemma:qlambda}
Let $\lambda_i>0$ for all $i$. Then the matrix $Q_\lambda$ is invertible and $\exp(Q_\lambda t)\to 0$ for $t\to\infty$.
\end{lemma}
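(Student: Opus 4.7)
The plan is to apply Gershgorin's circle theorem to show that every eigenvalue of $Q_\lambda$ has strictly negative real part; both conclusions of the lemma then follow essentially for free. First I would recall the relevant sign pattern: under the paper's convention, the off-diagonal entries $Q_{ij}$ for $i\neq j$ are non-negative and each \emph{column} of $Q$ sums to zero, so in particular $\sum_{i\neq j}Q_{ij}=-Q_{jj}$ with $Q_{jj}\le 0$. Since $\diag(\lambda)$ only alters the diagonal of $Q$, the off-diagonal entries of $Q_\lambda$ coincide with those of $Q$, and the $j$-th diagonal entry of $Q_\lambda$ equals $Q_{jj}-\lambda_j$.

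Next I would apply Gershgorin's theorem to the columns of $Q_\lambda$ (equivalently, to the rows of $Q_\lambda^\top$, which has the same spectrum). Each eigenvalue $\mu$ of $Q_\lambda$ lies in the union of closed disks
$$D_j=\{z\in\mathbb{C}:\,|z-(Q_{jj}-\lambda_j)|\le \textstyle\sum_{i\neq j}Q_{ij}=-Q_{jj}\}.$$
The rightmost real point of $D_j$ is $(Q_{jj}-\lambda_j)+(-Q_{jj})=-\lambda_j$. With $\alpha:=\min_j\lambda_j>0$ by hypothesis, this yields $\operatorname{Re}\mu\le -\alpha<0$ for every eigenvalue $\mu$ of $Q_\lambda$.

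Two consequences are then immediate. Invertibility follows because $0$ cannot lie in the spectrum, and the limit $\exp(Q_\lambda t)\to 0$ follows from the standard bound $\|\exp(Q_\lambda t)\|\le C\,t^{d-1}e^{-\alpha t}$ obtained from the Jordan decomposition (the polynomial factor accounts for any non-trivial Jordan blocks). There is no real obstacle; the only point that needs a second's attention is to apply Gershgorin \emph{column-wise}, since under this paper's sign convention it is the columns, not the rows, of $Q$ that sum to zero.
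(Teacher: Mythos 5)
Your proof is correct, and it takes a genuinely different route from the paper's. You localize the whole spectrum of $Q_\lambda$ in the open left half-plane by a single column-wise Gershgorin argument --- legitimate under the paper's convention, since the off-diagonal entries of $Q_\lambda$ are non-negative and its $j$-th column sums to $-\lambda_j\le -\min_i\lambda_i<0$ --- and then both conclusions follow at once: $0$ is not an eigenvalue, and the spectral abscissa is at most $-\min_i\lambda_i$, so $\|\exp(Q_\lambda t)\|\le C\,t^{d-1}e^{-t\min_i\lambda_i}\to 0$. The paper instead treats the two claims separately: invertibility is obtained from the factorization $Q_\lambda=-(I-Q\,\diag(\lambda)^{-1})\diag(\lambda)$, where $Q\,\diag(\lambda)^{-1}$ is again an intensity matrix, so $I-Q\,\diag(\lambda)^{-1}$ is invertible; and the decay of $\exp(Q_\lambda t)$ is proved probabilistically by embedding $Q_\lambda$ into the generator of an augmented chain on $d+1$ states whose extra state is the unique absorbing state, so that absorption occurs with probability one. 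Your argument is more elementary and quantitative (it exhibits an explicit exponential rate), whereas the paper's argument is in keeping with its probabilistic flavor and explains the decay as absorption of a killed chain. The one point that genuinely needed care --- applying Gershgorin to the columns rather than the rows, because it is the columns of $Q$ that sum to zero here --- you handled explicitly.
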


\begin{proof}
That $Q_\lambda$ is invertible, can be seen as follows. Write 
\[
Q_\lambda=-(I-Q\diag(\lambda)^{-1})\diag(\lambda)
\]
and note that $Q\diag(\lambda)^{-1}$ is also the intensity matrix of a Markov chain, as its off-diagonal elements are positive and $\one^\top Q\diag(\lambda)^{-1}=0$. Therefore $I-Q\diag(\lambda)^{-1}$ is invertible, and so is $Q_\lambda$.

In proving the limit result, we give a probabilistic argument.\footnote{This argument has been provided by Koen de Turck, University of Ghent.} Consider the augmented matrix
\[
Q^a_\lambda =
\begin{pmatrix}
0 & -\one^\top Q_\lambda \\
0 & Q_\lambda
\end{pmatrix},
\]
which is the transition matrix of a Markov chain taking values in $\{e^a_0,\ldots,e^a_d\}$, labelled as the standard basis vectors of $\rr^{d+1}$. Clearly, $0$ is an absorbing state, and the only one. Hence whatever initial state $x^a(0)$, we have that $\exp(Q^a_\lambda t)x^a(0)\to e^a_0$ for $t\to\infty$. Computing the exponential and taking $x^a(0)\neq e^a_0$, we find
\[
\exp(Q^a_\lambda t)x^a=
\begin{pmatrix}
1 & \one^\top(I-\exp(Q_\lambda t)) \\
0 & \exp(Q_\lambda t)
\end{pmatrix}x^a(0) = 
\begin{pmatrix}
\one^\top(I-\exp(Q_\lambda t))x(0) \\
\exp(Q_\lambda t)x(0).
\end{pmatrix}
\]
Hence $\exp(Q_\lambda t)\to 0$.
\end{proof}
In a next section, see Remark~\ref{remark:n=1}, we shall see how to compute $\pp(Y_t=1)$. It turns out to be the case that 
\[
\pp(Y_t=1)=1-\one^\top \exp(Q_\lambda t)x(0).
\]
We conclude in view of Lemma~\ref{lemma:qlambda}  that $\pp(Y_t=1)\to 1$ for $t\to\infty$. Hence, with probability one, the obligor eventually defaults, as expected.

\subsection{The MM model for multiple obligors}\label{section:mmmultiple}

In Section~\ref{section:mm1} we have seen results for default processes in the situation of a single obligor. In the present section we generalize those results, at the cost of considerably more complexity, to the situation of multiple obligors.

\subsubsection{Multiple obligors with time-varying intensity}
\label{section:mcl}

Recall \eqref{eq:yi}. Let's first look at the constant intensity case, $\lambda_t=\lambda>0$. Then $N_t=\sum_{i=1}^nY^i_t$ satisfies
\begin{equation}\label{eq:bin}
\dd N_t=\lambda(n-N_t)\,\dd t+ \dd m_t,
\end{equation}
where $m=\sum_{i=1}^nm^i$. By the independence of the default times,
$m$ is a martingale w.r.t.\ $\ff$ and $N_t$ has the $\mathrm{Bin}(n,1-\exp(-\lambda t))$ distribution. Moreover, given $N_u$, $u\leq s$, $N_t-N_s$ has for $t>s$ the $\mathrm{Bin}(n-N_s,1-\exp(-\lambda (t-s)))$ distribution. This model has long ago been used in software reliability going back to \cite{jm}, with various refinements, like in a Bayesian set up the parameters $n$ and $\lambda$ being random, see \cite{ks, littlewood} or with time varying but deterministic intensity function $\lambda(t)$, see \cite{go}.
\medskip\\
Next we look at the case of time varying, possibly random, $\lambda$.
By the assumed conditional independence of the $\tau^i$ given $\lambda$ we have, similar to the constant $\lambda$ case, that  $N_t$, conditional on the process $\lambda$, has a $\mathrm{Bin}(n, 1-\exp(-\Lambda_t))$ distribution with $\Lambda_t=\int_0^t\lambda_s\,\dd s$. 

Let $p^k(t)=\pp(N_t=k|\cf^\lambda)$, put 
\[
p(t)=\begin{pmatrix} p^{0}(t) \\ \vdots \\ p^{n}(t)
\end{pmatrix} 
\]
and 
\begin{equation}\label{eq:A}
A=\begin{pmatrix}
-n   & 0               & \cdots & \cdots & \cdots & 0 \\
n & -(n-1) & 0               & \cdots & \cdots & 0 \\
0             & n-1 & -(n-2) & 0      & \cdots & 0 \\
\vdots        & \ddots          & \ddots          & \ddots & \ddots & \vdots \\
\vdots        &                 & \ddots          & \ddots & -1 & 0 \\
0             & \cdots          & \cdots          & 0      & 1 & 0
\end{pmatrix}.
\end{equation}
Then we have for $p(t)$ the system of differential equations
\[
\dot{p}(t)=\lambda_tAp(t),
\]
which has solution (here we use that $\lambda$ is real-valued)
\[
p(t)=\exp(\Lambda_t A)e_0,
\]
where $\Lambda_t=\int_0^t\lambda_s\,\dd s$ and $e_0$ is the first standard basis vector of $\rr^{n+1}$. For the vector whose elements are the unconditional probabilities $\pp(N_t=k)$ one has to take the expectation and it depends on the specification of $\lambda$ whether this results in analytic expressions. We will see that this happens in case of a Markov modulated rate process.

\subsubsection{The MM case}\label{section:mmmo}

We assume to have a finite state Markov process as in Section~\ref{section:mm1} and let $\lambda_t=\lambda^\top X_{t-}$. For $N_t$ one now has its submartingale decomposition
\[
\dd N_t=\lambda^\top X_t(n-N_t)\,\dd t+ \dd m_t.
\]
This is the model of Section~\ref{section:mm1} extended to more obligors. The default rate for each obligor has become random ($\lambda^\top X_t$), but is taken the same for all of them.

%\noindent
Let $\nu^k_t=\one_{\{N_t=k\}}$, $k=0,\ldots,n$. For notational convenience we set $\nu^{-1}_t=0$. It follows that $\Delta\nu^k_{t}=1$ iff $N_t$ jumps from $k-1$ to $k$ at $t$, and $\Delta\nu^k_{t}=-1$ iff $N_t$ jumps from $k$ to $k+1$. This can be summarized by 
\[
\dd\nu^k_{t}=(\nu^{k-1}_{t-}-\nu^k_{t-})\,\dd N_t.
\]
In vector form this becomes
\begin{equation}\label{eq:nu0}
\dd\nu_t=(J-I)\nu_{t-}\,\dd N_t,
\end{equation}
where
\[
J=\begin{pmatrix}
0      &        &        &        &   \\
1      & 0      &        &        &   \\  
0      & 1      & \ddots &        &   \\
\vdots & \ddots & \ddots & \ddots & 0 \\
0      & \cdots & 0      & 1      & 0
\end{pmatrix}.
\]
Using the dynamics for $N$, we get
\begin{align*}
\dd\nu^k_{t} & =(\nu^{k-1}_{t-}-\nu^k_{t-})(\lambda^\top X_{t-}(n-N_t)\,\dd t+ \dd m_t) \\
& = \lambda^\top X_t ((n-k+1)\nu^{k-1}_t-(n-k)\nu^k_t)\,\dd t +\dd M_t.
\end{align*}
Letting $\nu_t=\begin{pmatrix}
\nu^0_t \\
\vdots \\
\nu^n_t
\end{pmatrix}$, we get from the above display
\begin{equation}\label{eq:nu}
\dd\nu_t=\lambda^\top X_t A\nu_t\,\dd t+ \dd M_t,
\end{equation}
where $A$ is as in \eqref{eq:A}. This equation for $\nu$ is a main ingredient in the next result.

\begin{proposition}\label{prop:zeta}
Let $\zeta_t=\nu_t\otimes X_t$. The process $\zeta$ is Markov with transition matrix $\mathbf{Q}$, where $\mathbf{Q}=(A\otimes\diag(\lambda)+I\otimes Q)$. It follows that $\ee[\zeta_t|\cf_s]=\exp(\bq(t-s))\zeta_s$.
\end{proposition}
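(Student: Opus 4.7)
The plan is to derive the semimartingale dynamics of $\zeta_t = \nu_t \otimes X_t$, show that they take the form $\dd\zeta_t = \bq\zeta_t\,\dd t + \dd M_t$ for a martingale $M$, and then read off both the Markov property and the claimed conditional expectation formula from this linear SDE.

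First, I would apply the It\^o product rule componentwise to $\zeta^{ki}_t = \nu^k_t X^i_t$. Because $N$ and $X$ have no simultaneous jumps (assumed in Section~\ref{section:mm1}) and $\nu$ only jumps when $N$ does (by \eqref{eq:nu0}), the same holds for $\nu$ and $X$, so $[\nu^k, X^i]_t \equiv 0$. Hence $\dd\zeta_t = \nu_{t-} \otimes \dd X_t + \dd\nu_t \otimes X_{t-}$. Substituting $\dd X_t = QX_t\,\dd t + \dd M^X_t$ and \eqref{eq:nu}, the drift is $[\nu_t \otimes QX_t + \lambda^\top X_t (A\nu_t \otimes X_t)]\,\dd t$, plus a martingale term coming from $\nu_{t-}\otimes \dd M^X_t$ and $\dd M_t \otimes X_{t-}$, each integrated against a predictable bounded process.

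The algebraic heart of the argument exploits that $X_t \in \{e_1,\ldots,e_d\}$: since $X^i_t X^j_t = X^i_t \delta_{ij}$, we have $\lambda^\top X_t \cdot X_t = \diag(\lambda) X_t$. By the mixed product property of the Kronecker product,
$$
\lambda^\top X_t \,(A\nu_t \otimes X_t) = A\nu_t \otimes \diag(\lambda) X_t = (A \otimes \diag(\lambda))\zeta_t,
$$
while $\nu_t \otimes QX_t = (I \otimes Q)\zeta_t$. Adding the two contributions yields precisely $\bq\zeta_t\,\dd t$ for the drift, so $\dd\zeta_t = \bq\zeta_t\,\dd t + \dd M_t$.

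From this linear SDE, an easy application of It\^o's formula shows that $e^{-\bq t}\zeta_t$ is a local martingale; since $\zeta_t$ is bounded (its coordinates are in $\{0,1\}$) it is a true martingale on any finite interval, giving $\ee[\zeta_t \mid \cf_s] = \exp(\bq(t-s))\zeta_s$. Because $\zeta_t$ takes values in the basis vectors $\{E_{(k,i)}\}$ of $\rr^{(n+1)d}$, projecting on the $(j,l)$-coordinate yields $\pp(\zeta_t = E_{(j,l)} \mid \cf_s) = E_{(j,l)}^\top \exp(\bq(t-s))\zeta_s$, which depends on the past only through $\zeta_s$; this is the Markov property with transition matrix $\bq$. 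The main obstacle is simply careful bookkeeping of the Kronecker-product identities — in particular the observation that $X$ being basis-vector valued is what converts the scalar factor $\lambda^\top X_t$ in \eqref{eq:nu} into the matrix $\diag(\lambda)$ that appears in $\bq$.
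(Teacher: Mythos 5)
Your proposal is correct and follows essentially the same route as the paper: the product rule with no common jumps of $N$ and $X$, the key identity $X_t\lambda^\top X_t=\diag(\lambda)X_t$ exploited via the mixed-product property of the Kronecker product, and the resulting linear equation $\dd\zeta_t=\bq\zeta_t\,\dd t+\dd M_t$. The only difference is that you make the final step self-contained (showing $e^{-\bq t}\zeta_t$ is a bounded martingale and projecting onto coordinates), where the paper simply cites Lemma~1.1 in Appendix~B of~\cite{EAM}.
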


\begin{proof}
We will use equation~\eqref{eq:nu} together with the dynamics of $X$.
Using the product rule and the fact that $N$ and $X$ do not jump at the same time and summarizing again all martingale terms again as $M$, we get (recall the multiplication rule $(A\otimes B)(C\otimes D)=(AC)\otimes (BD)$)
\begin{align*}
\dd (\nu_t\otimes X_t) & = \left((A\nu_t\lambda^\top X_t)\otimes X_t+\nu_t \otimes (QX_t)\right)\,\dd t + \dd M_t \\
& = \left((A\nu_t)\otimes (X_t\lambda^\top X_t)+\nu_t \otimes (QX_t)\right)\,\dd t + \dd M_t \\
& = \left((A\nu_t)\otimes (\diag(\lambda) X_t)+I\nu_t \otimes (QX_t)\right)\,\dd t + \dd M_t \\
& = (A\otimes\diag(\lambda)+I\otimes Q)(\nu_t \otimes X_t)\,\dd t + \dd M_t \\
& = \mathbf{Q}(\nu_t \otimes X_t)\,\dd t+ \dd M_t.
\end{align*}
Note that $\zeta_t$ by construction consists of the indicators of the values of the joint process $(\nu,X)$. Hence the equation $\dd\zeta_t=\bq\zeta_t\,\dd t + \dd M_t$ reveals, cf.\ Lemma~1.1 in Appendix~B of~\cite{EAM}, that $\zeta$ (and hence  $(\nu,X)$) is Markov.
\end{proof}
An explicit computation shows 
\begin{equation}\label{eq:bq}
\bq=\begin{pmatrix}
Q_{n\lambda}   & 0               & \cdots & \cdots & \cdots & 0 \\
n\,\diag(\lambda) & Q_{(n-1)\lambda} & 0               & \cdots & \cdots & 0 \\
0             & (n-1)\,\diag(\lambda)  & Q_{(n-2)\lambda} & 0      & \cdots & 0 \\
\vdots        & \ddots          & \ddots          & \ddots & \ddots & \vdots \\
\vdots        &                 & \ddots          & \ddots & Q_\lambda & 0 \\
0             & \cdots          & \cdots          & 0      & \diag(\lambda) & Q
\end{pmatrix},
\end{equation}
where for $k\in\nn$ we have $Q_{k\lambda}=Q-k\,\diag(\lambda)$.

\begin{remark}
The original dynamic equations for $X_t$ and $N_t$ can be retrieved from Proposition~\ref{prop:zeta}.
Realizing the relations $X_t=(\one^\top \otimes I)\zeta_t$ and $(\one^\top \otimes I)\mathbf{Q}=\one^\top\otimes Q$, and $\one^\top A=0$, we obtain from Proposition~\ref{prop:zeta}
\begin{align*}
\dd X_t & =(\one^\top \otimes I)\left(\mathbf{Q}(\nu_t \otimes X_t)\right)\,\dd t+ \dd M_t \\
& = (\one^\top\otimes Q)(\nu_t\otimes X_t)\,\dd t+ \dd M_t \\
& =  QX_t\,\dd t+ \dd M_t.
\end{align*}
Similarly, we get from $\nu_t=(I\otimes \one^\top)\zeta_t$,
\begin{align*}
\dd \nu_t & =(I\otimes \one^\top)\left(\mathbf{Q}(\nu_t \otimes X_t)\right)\,\dd t+ \dd M_t \\
& = (A\otimes \lambda^\top)(\nu_t\otimes X_t)\,\dd t+ \dd M_t \\
& =  A\nu_t\lambda^\top X_t\,\dd t+ \dd M_t.
\end{align*}
Using $\begin{pmatrix} 0 & 1 & \cdots & n\end{pmatrix}A\nu_t=\begin{pmatrix} n &  \cdots & 1 & 0\end{pmatrix}\nu_t=n-N_t$, we get from the last display the decomposition $\dd N_t=(n-N_t)\lambda^\top X_t \,\dd t+ \dd m_t$ back.
\end{remark}
Letting $\pi(t)=\ee \zeta_t$, we obtain from Proposition~\ref{prop:zeta} the ODE 
\begin{equation}\label{eq:pi}
\dot{\pi}(t) =\bq\pi(t)
\end{equation}
with the initial condition $\pi(0)=e_0\otimes x(0)$, where $e_0$ has $1$ as its first element, all other elements being zero. 
We will give a rather explicit expression for $\pi(t)=\exp(\bq t)\pi(0)$, for which we need some additional results.

The differential equation for $\pi$ is the following type of forward equation,
\[
\dot{F}=\mathbf{Q}F.
\]
Here $F$ can be any matrix valued function of appropriate dimensions.
We will block-diagonalize the matrix $\mathbf{Q}$. The transformation that is needed for that is given by the matrix $V$ whose $ij$-block ($i,j=0,\ldots,n$) is
\[
V_{ij}={n-j \choose n-i}(-1)^{i-j}I.
\]
Note that $V_{ij}=0$ for $i<j$, $V$ is block lower-triangular.
The inverse matrix is also block lower-triangular with blocks
\[
V^{-1}_{ij}={n-j \choose n-i}I.
\]
One may check by direct computation that indeed $VV^{-1}=I$.
It is straightforward to verify that $\mathbf{Q}^V:=V^{-1}\mathbf{Q}V$ is block-diagonal with $i$-th block ($i=0,\ldots,n$) equal to
\[
\mathbf{Q}^V_i=Q_{(n-i)\lambda}.
\]
Putting $G=V^{-1}F$ we obtain
\[
\dot{G}=\mathbf{Q}^V\,G,
\]
whose solution satisfying $G(0)=I$ is block diagonal with $i$-th block $G_i(t)=\exp(Q_{(n-i)\lambda}t)$.
We thus obtain the following lemma.
\begin{lemma}\label{lemma:diag}
The solution to the forward ODE $\dot{F}=\mathbf{Q}F$ with initial condition $F(0)$ is given by $F(t)=\exp(\bq t)F(0)$, where
\[
\exp(\bq t)=V
\begin{pmatrix}
\exp(Q_{n\lambda}t) & & \\
& \ddots & \\
& & \exp(Qt)
\end{pmatrix}
V^{-1}.
\]
If $F(t)=\exp(\bq t)$, its blocks $F_{ij}(t)$  can be explicitly computed. One has $F_{ij}(t)=0$ if $i<j$, and for $i\geq j$ it holds that 
\[
F_{ij}(t)={n-j \choose n-i} \sum_{k=j}^i(-1)^{i-k} {i-j\choose i-k}\exp(Q_{(n-k)\lambda}t).
\]
\end{lemma}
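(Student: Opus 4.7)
The first assertion is just the standard solution of a linear matrix ODE with constant coefficients, giving $F(t)=\exp(\bq t)F(0)$. For the factored representation in the display, I would introduce $G(t)=V^{-1}F(t)$ and invoke the block-diagonalization $V^{-1}\bq V=\bq^V$ established immediately before the lemma, which turns the system into $\dot G=\bq^V G$. Since $\bq^V$ is block-diagonal with blocks $Q_{(n-i)\lambda}$, its exponential is block-diagonal with $i$-th block $\exp(Q_{(n-i)\lambda}t)$. Multiplying back by $V$ on the left then gives $\exp(\bq t)=V\exp(\bq^V t)V^{-1}$, which is the first display of the lemma.

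For the explicit block expression I would carry out the block-matrix product $V\exp(\bq^V t)V^{-1}$ directly. Because the middle factor is block-diagonal, only a single summation index survives and
\[
F_{ij}(t)=\sum_{k} V_{ik}\,\exp(Q_{(n-k)\lambda}t)\,V^{-1}_{kj}.
\]
Both $V$ and $V^{-1}$ are block lower-triangular, so nonzero terms require $j\leq k\leq i$; in particular $F_{ij}(t)=0$ whenever $i<j$, as claimed. Substituting the explicit blocks $V_{ik}=\binom{n-k}{n-i}(-1)^{i-k}I$ and $V^{-1}_{kj}=\binom{n-j}{n-k}I$ reduces the identity to be proved to
\[
\sum_{k=j}^{i}(-1)^{i-k}\binom{n-k}{n-i}\binom{n-j}{n-k}\exp(Q_{(n-k)\lambda}t)=\binom{n-j}{n-i}\sum_{k=j}^{i}(-1)^{i-k}\binom{i-j}{i-k}\exp(Q_{(n-k)\lambda}t).
\]

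What remains is the binomial identity
\[
\binom{n-k}{n-i}\binom{n-j}{n-k}=\binom{n-j}{n-i}\binom{i-j}{i-k}\qquad (j\leq k\leq i\leq n),
\]
which both sides simplify to $(n-j)!/[(n-i)!(i-k)!(k-j)!]$ after expanding the factorials; combinatorially, both count the choice of disjoint $(i-k)$- and $(k-j)$-subsets inside an $(n-j)$-set. Once this is noted, the factor $\binom{n-j}{n-i}$ can be pulled outside the sum, matching the lemma's formula exactly. The main step where work is concentrated is the block-diagonalization $V^{-1}\bq V=\bq^V$, but since that is already asserted in the paragraph preceding the lemma, the proof of the lemma itself is essentially a bookkeeping computation combined with this single binomial identity; no genuine obstacle should arise.
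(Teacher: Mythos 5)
Your proof is correct and follows essentially the same route as the paper: block-diagonalize via $G=V^{-1}F$, expand $F_{ij}(t)=\sum_{k=j}^{i}V_{ik}\exp(Q_{(n-k)\lambda}t)V^{-1}_{kj}$ using the triangularity of $V$ and $V^{-1}$, and finish with the binomial identity $\binom{n-k}{n-i}\binom{n-j}{n-k}=\binom{n-j}{n-i}\binom{i-j}{i-k}$. Your write-up is if anything slightly more explicit than the paper's, which leaves that identity implicit in its last displayed step.
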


\begin{proof}
We use the block triangular structure of $V$ and $V^{-1}$ together with the block diagonal structure of $\bq^V$ to compute
\begin{align*}
F_{ij}(t) & = \sum_{k=j}^{i}V_{ik}\exp(Q_{(n-k)\lambda}t)V_{kj} \\
& = \sum_{k=j}^{i}{n-k \choose n-i}(-1)^{i-k}\exp(Q_{(n-k)\lambda}t){n-j \choose n-k} \\
& = {n-j \choose n-i}\sum_{k=j}^{i}(-1)^{i-k}{i-j \choose i-k}\exp(Q_{(n-k)\lambda}t),
\end{align*}
as stated.
\end{proof}

\begin{proposition}\label{proposition:pink}
The solution $\pi(t)$ to the system \eqref{eq:pi} of ODEs under the initial condition $\pi(0)=e_0\otimes x(0)$ has components $\pi^i(t)\in\rr^d$ given by
\begin{equation}\label{eq:pisol}
\pi^i(t)={n \choose i}\sum_{k=0}^i(-1)^{i-k}{i\choose k}\exp(Q_{(n-k)\lambda}t)x(0).
\end{equation}
\end{proposition}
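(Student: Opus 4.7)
The plan is to read off $\pi(t)$ directly from Lemma~\ref{lemma:diag} applied to the specific initial condition $\pi(0) = e_0 \otimes x(0)$.

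First I would note that the ODE \eqref{eq:pi} has solution $\pi(t) = \exp(\bq t)\pi(0)$, and that since $e_0$ is the first standard basis vector of $\rr^{n+1}$, the vector $\pi(0)$ has block structure with $x(0)$ in the $0$-th block of size $d$ and zero in the remaining $n$ blocks. Therefore the $i$-th block of $\pi(t)$ is simply the $(i,0)$-block of $\exp(\bq t)$ acting on $x(0)$:
\[
\pi^i(t) = F_{i0}(t)\,x(0),
\]
where $F(t) = \exp(\bq t)$ is the matrix described in Lemma~\ref{lemma:diag}.

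Next I would substitute $j=0$ into the explicit formula for $F_{ij}(t)$ from Lemma~\ref{lemma:diag}:
\[
F_{i0}(t) = \binom{n}{n-i}\sum_{k=0}^{i}(-1)^{i-k}\binom{i}{i-k}\exp(Q_{(n-k)\lambda}t),
\]
and simplify the binomial coefficients using $\binom{n}{n-i}=\binom{n}{i}$ and $\binom{i}{i-k}=\binom{i}{k}$. Applying this operator to $x(0)$ yields precisely \eqref{eq:pisol}.

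There is essentially no obstacle: all the work has already been done in Lemma~\ref{lemma:diag}, where the block-diagonalization $\bq = V \bq^V V^{-1}$ was exploited to compute the blocks of $\exp(\bq t)$. The present proposition is just the observation that when the initial condition is concentrated in the $0$-th block, only the first block-column of $\exp(\bq t)$ contributes, and simplifying the binomial factors gives the stated form. Thus the proof reduces to a one-line invocation of Lemma~\ref{lemma:diag} followed by a cosmetic rewriting of $\binom{n}{n-i}$ and $\binom{i}{i-k}$.
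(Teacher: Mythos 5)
Your proposal is correct and is essentially identical to the paper's own proof: both apply Lemma~\ref{lemma:diag} with $j=0$ to the initial condition $\pi(0)=e_0\otimes x(0)$ and then rewrite ${n\choose n-i}$ as ${n\choose i}$ and ${i\choose i-k}$ as ${i\choose k}$. No gaps.
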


\begin{proof}
We use Lemma~\ref{lemma:diag} and recall the specific form of the initial condition $\pi(0)$. We have to compute $\exp(\bq t)\pi(0)$ and obtain from Lemma~\ref{lemma:diag} with $j=0$ for $\pi^i(t)=F_{i0}(t)$
\begin{align*}
\pi^i(t) & ={n \choose n-i} \sum_{k=0}^i(-1)^{i-k} {i\choose i-k}\exp(Q_{(n-k)\lambda}t)x(0) \\
& ={n \choose i} \sum_{k=0}^i(-1)^{i-k} {i\choose k}\exp(Q_{(n-k)\lambda}t)x(0). 
\end{align*}
\end{proof}

\begin{remark}\label{remark:n=1}
Let us look at a special case, $n=1$. Then we can write $N_t=Y_t$
and it is sufficient to compute
\begin{equation}\label{eq:zt}
\pi^1(t)=\ee(Y_tX_t)=\left(\exp(Qt)-\exp(Q_\lambda t)\right)x(0).
\end{equation}
As a consequence we are able to compute $\pp(Y_t=1)=\one^\top\ee(Y_tX_t)$,
\[
\pp(Y_t=1)=1-\one^\top \exp(Q_\lambda t)x(0),
\]
since $\one^\top\exp(Q_t)=\one^\top$.
As $\exp(Qt)\to \pi\one^\top$, we conclude in view of Lemma~\ref{lemma:qlambda} from \eqref{eq:zt} that $\pi^1(t)\to\pi$ for $t\to\infty$. This result should be obvious, as $Y_t$ eventually becomes 1 and $X_t$ converges in distribution to its invariant law. 

For the case $n>1$ the expressions for $\pi^i(t)$ are a bit complicated, but their asymptotic values for $t\to\infty$, are as expected, $\pi^i(t)\to 0$ for $i<n$, whereas $\pi^n(t)\to \pi$. This again follows from Lemma~\ref{lemma:qlambda}.
\end{remark}
Proposition~\ref{proposition:pink} has the following corollary.

\begin{corollary}\label{cor:multiphi}
Let $\phi(t,u)=\ee\exp(\ii u N_t)X_t$. It holds that
\[
\phi(t,u)=\sum_{k=0}^n{n\choose k}\exp(\ii uk)(1-\exp(\ii u))^{n-k}\exp(Q_{(n-k)\lambda}t)x(0).
\]
\end{corollary}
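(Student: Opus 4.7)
The plan is to deduce the corollary directly from Proposition~\ref{proposition:pink} by expanding the expectation over the possible values of $N_t$ and rearranging the resulting double sum. Since $\nu^i_t = \one_{\{N_t=i\}}$, we have
\[
\phi(t,u) = \ee[\exp(\ii u N_t) X_t] = \sum_{i=0}^n \exp(\ii u i)\,\ee[\nu^i_t X_t] = \sum_{i=0}^n \exp(\ii u i)\,\pi^i(t),
\]
and then I would insert the explicit expression for $\pi^i(t)$ from \eqref{eq:pisol}.

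After substitution the expression becomes a double sum over $i$ and $k$ with $0\le k\le i\le n$. The key step is to interchange the two summations (so that the outer index is $k$, which appears in the matrix exponential $\exp(Q_{(n-k)\lambda}t)$) and then to apply the binomial identity
\[
\binom{n}{i}\binom{i}{k} = \binom{n}{k}\binom{n-k}{i-k}.
\]
This factors out $\binom{n}{k}\exp(Q_{(n-k)\lambda}t)x(0)$ and leaves, after the substitution $j=i-k$, the inner sum
\[
\sum_{j=0}^{n-k} (-1)^j \binom{n-k}{j} \exp(\ii u (k+j)) = \exp(\ii u k)\bigl(1-\exp(\ii u)\bigr)^{n-k}
\]
by the binomial theorem. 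Collecting the pieces yields exactly the stated formula.

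There is no real obstacle; the only thing to be careful about is the correct swap of summation order (limits $0\le k\le i\le n$ become $0\le k\le n$, $k\le i\le n$) and the clean application of the two binomial identities. No further probabilistic input is needed beyond Proposition~\ref{proposition:pink}.
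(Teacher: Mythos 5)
Your proof is correct and is essentially the paper's own argument: expand $\phi(t,u)=\sum_{i=0}^n e^{\ii u i}\pi^i(t)$ using Proposition~\ref{proposition:pink}, interchange the order of summation, and finish with a binomial identity. The paper invokes the single combined identity $\sum_{k=j}^n\beta^k\binom{n}{k}\binom{k}{j}=\binom{n}{j}\beta^j(1+\beta)^{n-j}$ with $\beta=-e^{\ii u}$, which is precisely your two steps (the trinomial revision $\binom{n}{i}\binom{i}{k}=\binom{n}{k}\binom{n-k}{i-k}$ followed by the binomial theorem) rolled into one.
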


\begin{proof}
We shall use the elementary identity
\[
\sum_{k=j}^n\beta^k{n\choose k}{k\choose j}={n\choose j}\beta^j(1+\beta)^{n-j} 
\]
for $\beta=-e^{-\ii u}$ in the last step in the chain of equalities below. From Proposition~\ref{proposition:pink} we obtain 
\begin{align*}
\ee\exp(\ii u N_t)X_t & = \sum_{k=0}^ne^{\ii u k}\pi^k(t) \\
& = \sum_{k=0}^ne^{\ii u k}{n \choose k}\sum_{j=0}^k(-1)^{k-j}{k\choose j}\exp(Q_{(n-j)\lambda}t)x(0) \\
& = \sum_{j=0}^n\sum_{k=j}^n(-e^{\ii u})^k{n \choose k}{k\choose j}(-1)^{j}\exp(Q_{(n-j)\lambda}t)x(0) \\
& = \sum_{j=0}^n {n\choose j}e^{\ii ju}(1-e^{\ii u})^{n-j}\exp(Q_{(n-j)\lambda}t)x(0).
\end{align*}
\end{proof}

\begin{remark}\label{remark:bin}
Alternatively, one can compute a moment generating function $\psi(t,v)=\ee\exp(-vN_t)X_t$ for $v\geq 0$. Let $B$ have a binomial distribution with parameters $n$ and $p=1-\exp(-v)$. Then we have for $\psi(t,v)$ the compact expression $\psi(t,v)=\ee \exp((Q-B\diag(\lambda))t)x(0)=\ee\exp(Q_{\lambda B}t)x(0)$.
\end{remark}

\begin{remark}
There appears to be no simpler representation for $\phi(t,u)$. We note that this function also satisfies the  PDE
\begin{equation}
\dot{\phi}(t,u)=(Q+n(e^{\ii u}-1)\diag(\lambda))\phi(t,u)+\ii(e^{\ii u}-1)\diag(\lambda)\frac{\partial \phi(t,u)}{\partial u}.
\end{equation}
Just by computing the partial derivatives, one verifies that this equation holds. Alternatively, one can apply the It\^o formula to $\exp(\ii u N_t)X_t$ followed by taking expectations. 
\end{remark}

\subsubsection{Conditional probabilities}\label{section:cpf}

The vehicle we use is the process $\zeta$, recall $\zeta_t=\nu_t\otimes X_t$. Our aim is to find expressions for $\zeta_{t|s}=\ee [\zeta_t|\cf_s]$ for $t>s$, from which one can deduce  the conditional probabilities $\ee[\nu_t|\cf_s]$ and $\ee[N_t|\cf_s]$. By the Markov property, Proposition~\ref{prop:zeta}, we have $\ee[\zeta_t|\cf_s]=\exp(\bq(t-s))\zeta_s$. 
Let $\zeta_{t|s}=\ee[\zeta_t|\cf_s]$ and $\zeta^k_{t|s}=\ee[\one_{\{N_t=k\}}X_t|\cf_s]$. 
We aim at a more explicit representation  of the conditional probabilities $\zeta^k_{t|s}$ for $k\geq 0$. Note that $\zeta^k_{t|s}=(e_k^\top\otimes I)\zeta_{t|s}$. Hence $\zeta^k_{t|s}=(e_k^\top\otimes I)\exp(\mathbf{Q}(t-s))\zeta_s$.
Using Lemma~\ref{lemma:diag}, we have
\[
\zeta^k_{t|s}=(e_k^\top\otimes I)V
\begin{pmatrix}
\exp(Q_{n\lambda}(t-s)) & & \\
& \ddots & \\
& & \exp(Q(t-s))
\end{pmatrix}
V^{-1}\zeta_s.
\]
By matrix computations as before this leads to the following result.
\begin{proposition}\label{prop:zetak}
It holds that 
\[
\zeta^k_{t|s}=\sum_{j=0}^k{n-j \choose k-j}\sum_{i=0}^k(-1)^{k-i}{k-j\choose k-i}\exp(Q_{(n-i)\lambda}(t-s))\zeta^j_s.
\]
\end{proposition}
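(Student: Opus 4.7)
The plan is to apply the Markov property from Proposition~\ref{prop:zeta} and then pick off the block structure using Lemma~\ref{lemma:diag}. Since $\zeta_{t|s}=\exp(\bq(t-s))\zeta_s$, and $\zeta^k_{t|s}=(e_k^\top\otimes I)\zeta_{t|s}$, writing $F(t-s)=\exp(\bq(t-s))$ in block form yields
\[
\zeta^k_{t|s}=\sum_{j=0}^n F_{kj}(t-s)\,\zeta^j_s.
\]
Because $F_{kj}=0$ for $j>k$ (the matrix $V$, its inverse, and the block-diagonal middle factor are all block lower-triangular in a way that $\bq$ itself is), the sum over $j$ truncates at $k$.

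Next, I would substitute the explicit expression from Lemma~\ref{lemma:diag},
\[
F_{kj}(t-s)={n-j \choose n-k}\sum_{i=j}^{k}(-1)^{k-i}{k-j\choose k-i}\exp(Q_{(n-i)\lambda}(t-s)),
\]
into the preceding display. Two cosmetic steps bring the formula into the form claimed in the proposition. First, since $(n-j)-(n-k)=k-j$, we have the identity ${n-j \choose n-k}={n-j \choose k-j}$, which matches the outer binomial in the statement. Second, the inner sum can be extended to start at $i=0$ rather than $i=j$ without changing its value, because for $i<j$ we have $k-i>k-j$ and therefore ${k-j\choose k-i}=0$.

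Putting these together gives
\[
\zeta^k_{t|s}=\sum_{j=0}^k{n-j \choose k-j}\sum_{i=0}^{k}(-1)^{k-i}{k-j\choose k-i}\exp(Q_{(n-i)\lambda}(t-s))\,\zeta^j_s,
\]
which is exactly the claim. There is no real obstacle here: the content is entirely in Proposition~\ref{prop:zeta} (Markov property and the form of $\bq$) and Lemma~\ref{lemma:diag} (block-diagonalization of $\exp(\bq t)$). The only thing to be careful about is correctly reindexing the binomial identity ${n-j \choose n-k}={n-j \choose k-j}$ and justifying that padding the inner summation with zero terms is legitimate.
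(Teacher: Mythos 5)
Your proposal is correct and follows exactly the route the paper takes (and merely sketches as ``matrix computations as before''): apply the Markov property of Proposition~\ref{prop:zeta}, extract the $k$-th block row of $\exp(\bq(t-s))$ via Lemma~\ref{lemma:diag}, and reindex using ${n-j\choose n-k}={n-j\choose k-j}$ together with the vanishing of ${k-j\choose k-i}$ for $i<j$. Nothing is missing.
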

Note that in the formula of this proposition, only one of the $\zeta^j_s$ is different from zero and then equal to $X_s$. Effectively, the sum over $j$ thus reduces to one term only.
The conditional probabilities $\nu^k_{t|s}=\pp(N_t=k|\cf_s)$ can now simply be computed as $\one^\top\zeta^k_{t|s}$. Note that these still depend on $X_s$, and one has the explicit expression 
\[
\ee[\nu^k_t|\cf_s]=
\sum_{j=0}^n{n-j \choose n-k} \sum_{i=j}^k(-1)^{k-i} {k-j\choose k-i}\one^\top\exp(Q_{(n-i)\lambda}(t-s))X_s\nu^j_s.
\]

\begin{remark}
Consider the special case $n=1$ and let $Z_t=Y_tX_t$, $Y_t$ as in Section~\ref{section:mm1}. This amounts to taking $k=n=1$ in Proposition~\ref{prop:zetak} and one gets for $Z_{t|s}=\ee[Z_t|\cf^Y_s]$ the simpler expression
\begin{equation}
Z_{t|s}  = 
\exp(Q_\lambda (t-s))Z_s+\big(\exp(Q(t-s))-\exp(Q_\lambda (t-s))\big)X_s. \label{eq:zts}
\end{equation}
\end{remark}
The next purpose is to compute $\ee[e^{\ii uN_t}X_t|\cf_s]$ and from that one $\ee[e^{\ii uN_t}|\cf_s]=\one^\top\ee[e^{\ii uN_t}X_t|\cf_s]$. 
\begin{proposition}
The following hold.
\begin{align}
\ee[e^{\ii uN_t}X_t|\cf_s] &=\sum_{k=0}^n\sum_{j=k}^n {n-k \choose j-k}(1-e^{\ii u})^{n-j}e^{\ii uj}\exp(Q_{(n-j)\lambda} (t-s))\zeta^k_s, \nonumber\\
\ee[e^{\ii uN_t}|\cf_s]&=\sum_{k=0}^n\sum_{j=k}^n {n-k \choose j-k}(1-e^{\ii u})^{n-j}e^{\ii uj}\one^\top\exp(Q_{(n-j)\lambda} (t-s))\zeta^k_s.\label{eq:cfmmn}
\end{align}
\end{proposition}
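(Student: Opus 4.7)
The plan is to derive the first formula directly from Proposition~\ref{prop:zetak} and then obtain the second one as an easy consequence. Writing
\[
\ee[e^{\ii u N_t}X_t|\cf_s] = \sum_{k=0}^n e^{\ii u k}\,\zeta^k_{t|s},
\]
I would substitute the expression for $\zeta^k_{t|s}$ provided by Proposition~\ref{prop:zetak} and interchange the order of summation so that the dummy index labelling $\zeta^{\cdot}_s$ becomes the outermost one. This isolates $\zeta^j_s$ on the right and leaves an inner double sum involving $\exp(Q_{(n-i)\lambda}(t-s))$ weighted by $e^{\ii u k}(-1)^{k-i}{n-j\choose k-j}{k-j\choose k-i}$, where $j\leq i\leq k\leq n$.

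Next I would push the sum over $k$ innermost, substitute $\ell = k-i$, and apply the identity ${n-j\choose i+\ell-j}{i+\ell-j\choose \ell} = {n-j\choose \ell}{n-j-\ell\choose i-j}$ together with the finite binomial theorem
\[
\sum_{\ell\geq 0} {n-j\choose\ell}{n-j-\ell\choose i-j}(-e^{\ii u})^\ell = {n-j\choose i-j}(1-e^{\ii u})^{n-i},
\]
which collapses the inner sum to ${n-j\choose i-j}\,e^{\ii u i}(1-e^{\ii u})^{n-i}\exp(Q_{(n-i)\lambda}(t-s))$. After renaming $(j,i)\to(k,j)$, this yields exactly the first claim.

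The second identity follows from the first by left-multiplication with $\one^\top$: since $X_t$ takes values in the standard basis vectors, $\one^\top X_t = 1$ almost surely, hence $\one^\top\ee[e^{\ii u N_t}X_t|\cf_s] = \ee[e^{\ii u N_t}|\cf_s]$, and $\one^\top$ slides past the matrix exponentials to produce \eqref{eq:cfmmn}.

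The main obstacle is simply the multi-index bookkeeping in the triple sum; no new ideas beyond the binomial manipulations already used in the proof of Corollary~\ref{cor:multiphi} are required. A cleaner but less direct alternative would be to invoke the Markov property from Proposition~\ref{prop:zeta} together with the block-triangular structure of $\bq$ in \eqref{eq:bq} to argue that, conditionally on $\{N_s=k\}$, the process $(X_{s+\cdot},N_{s+\cdot}-N_s)$ is distributed as an MM default process with $n-k$ obligors and Markov chain initial state $X_s$; applying Corollary~\ref{cor:multiphi} with $n$ replaced by $n-k$, $x(0)$ by $X_s$ and $t$ by $t-s$, multiplying by $e^{\ii u k}$ on the event $\{N_s=k\}$, and using $\one_{\{N_s=k\}}X_s=\zeta^k_s$ reproduces the same formula.
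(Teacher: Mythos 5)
Your argument is correct and follows the same route as the paper: express $e^{\ii u N_t}X_t=\sum_k e^{\ii u k}\zeta^k_t$, apply Proposition~\ref{prop:zetak} to the conditional expectations $\zeta^k_{t|s}$, and carry out the binomial resummation (which the paper leaves implicit but your identity $\sum_{\ell}{n-j\choose\ell}{n-j-\ell\choose i-j}(-e^{\ii u})^\ell={n-j\choose i-j}(1-e^{\ii u})^{n-i}$ handles correctly), with the second formula following by left-multiplication with $\one^\top$. Your index bookkeeping checks out, so the proof is complete and matches the paper's.
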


\begin{proof}
We start from the identity $e^{iuN_t}X_t=\mathbf{F}\zeta_t$, with 
$
\mathbf{F}=e(u)\otimes I,
$
where $e(u)=\begin{pmatrix} 1 & e^{\ii u} & \cdots & e^{n\ii u} \end{pmatrix}$. Hence we have
\[
\ee[e^{\ii uN_t}X_t|\cf_s]=(e(u)\otimes I)\exp(\mathbf{Q}(t-s))\zeta_s.
\]
This can be put into the asserted more explicit representation, involving the matrices $Q_{k\lambda}$ by application of Proposition~\ref{prop:zetak}. The second assertion is a trivial consequence. 
\end{proof}
It is conceivable that only $N$ is observed, and not the background process $X$. In such a case one is only able to compute conditional expectation of quantities as above  conditioned on $\cf^N_s$ instead of $\cf_s$. See Section~\ref{section:filter2} for results.

\section{The Markov Modulated Poisson process}\label{section:mmpoisson}

In this section we study MM Poisson processes. These have an intensity process $\lambda_t=\lambda^\top X_t$, using the same notation as before. In terms of defaultable obligors, such processes occur as limits of the total number of defaults $N_t$ as in Section~\ref{section:mmmultiple} where $n\to\infty$ and the vector $\lambda$ is scaled to become $\lambda/n$, as we shall see later. So we can use this to approximate the total number of defaults in a market with a large number of obligors, where each of them has small default rate.

\subsection{The model}
The point of departure is to postulate the dynamics of the counting process $N$ as
\[
\dd N_t=\lambda^\top X_t\,\dd t+ \dd m_t.
\]
We follow the same approach as before. So
we use that
conditionally on $\cf^X$ we have that $N_t$ has a $\mathrm{Poisson}(\Lambda_t)$ distribution with $\Lambda_t=\int_0^t\lambda^\top X_s\,\dd s$. It follows that 
\[
\ee[\one_{\{N_t=k\}}X_t|\cf^X]=\frac{1}{k!}\Lambda_t^k\exp(- \Lambda_t)X_t=:p^{k}(t)X_t,
\]
and 
\[
\frac{\dd}{\dd t} p^{k}(t)=p^{k-1}(t)-p^{k}(t)\lambda^\top X_t.
\]
Then we obtain
\[
\dd \ee[\one_{\{N_t=k\}}X_t|\cf^X]=\big(p^{k-1}(t)-p^{k}(t)\big)\diag(\lambda)X_t\,\dd t +p^{k}(t)(QX_t\,\dd t + \dd M_t),
\]
and with $\pi^{k}(t)=\ee(p^{k}(t)X_t)$ we find
\begin{align*}
\dot{\pi}^{k}(t) & =\diag(\lambda)\pi^{k-1}(t)+(Q-\diag(\lambda))\pi^{k}(t).
\end{align*} 
For $k=0$, one immediately finds the solution $\pi^0(t)=\exp(Q_\lambda t)x(0)$. For $k>0$ there seems to be no simply expression in terms of exponential of $Q$ and $Q_{k\lambda}$ as in Proposition~\ref{proposition:pink}, not even for $k=1$, although one has
\[
\pi^1(t)=\int_0^t\exp(-Q_\lambda(t-s))\diag(\lambda)\exp(Q_\lambda s)\,\dd s\,x(0).
\]
However, it is possible to get a formula for the vector
\[
\Pi^n(t) = \begin{pmatrix}
\pi^0(t) \\
\vdots \\
\pi^n(t)
\end{pmatrix},
\]
since it satisfies the ODE
\[
\dot{\Pi}^n(t)=\bq_n\Pi^n(t),
\]
where $\bq_n\in\rr^{(n+1)d\times (n+1)d}$ is given by
\[
\bq_n=
\begin{pmatrix}
Q-\diag(\lambda) & 0  & \cdots & \cdots & 0 \\
\diag(\lambda) & Q-\diag(\lambda) & 0 & & 0 \\
0 & \diag(\lambda) & \ddots & \ddots & \vdots \\
\vdots & & \ddots & Q-\diag(\lambda) & 0 \\
0 & \cdots & 0 & \diag(\lambda) & Q-\diag(\lambda)
\end{pmatrix}.
\]
Together with the initial conditions $\pi^k(0)=\delta_{k0}x(0)$, one obtains
\[
\Pi^n(t)=\exp(\bq_nt)(e^n_0\otimes x(0)),
\]
where $e^n_0$ is the first basis vector of $\rr^{n+1}$.
An elementary expression for $\exp(\bq_nt)$ is not available due to the fact that $Q-\diag(\lambda)$ and $\diag(\lambda)$ do not commute. Besides, $\bq_n$ is block lower triangular with identical blocks on the main diagonal and therefore cannot be block diagonalized.
%Still, one has in principle a joint formula for $\pi^0(t),\ldots,\pi^n(t)$. 
\medskip\\
However, in the present case there is a nice expression for the characteristic function $\phi(t,u)=\ee\exp(\ii uN_t)X_t$, unlike the situation of Corollary~\ref{cor:multiphi}. To determine $\phi(t,u)$, we apply the It\^o formula (note that $[N,X]=0$) and obtain
\begin{equation}\label{eq:phinx}
\dd \exp(\ii uN_t)X_t = (e^{\ii u}-1)e^{\ii u N_{t-}}X_{t-}\dd N_t + e^{\ii u N_{t-}}\dd X_{t},
\end{equation}
which yields after taking expectations and using the dynamics of $X$ and $N$
\[
\dot{\phi}(t,u)=((e^{\ii u}-1)\diag(\lambda)+Q)\phi(t,u).
\]
Hence
\[
\phi(t,u)=\exp\big(((e^{\ii u}-1)\diag(\lambda)+Q)t\big)x(0).
\]
Contrary to the $\pi^k(t)$ of Proposition~\ref{proposition:pink} we thus found a \emph{simple} formula for $\phi(t,u)$. This formula is in line with \cite[Proposition~1.6]{ASM} for Markovian arrival processes.

\begin{remark}
It is possible to obtain the above results as limits from results in Section~\ref{section:mmmo}, by replacing there $\lambda$ by $\lambda/n$ and letting $n\to\infty$. 

If we look at the moment generating functions $\psi(t,v)=\ee\exp(-vN_t)X_t$, we have $\psi(t,v)=\exp\big((Q-(1-e^{-v})\diag(\lambda))t\big)x(0)$. Replace in Remark~\ref{remark:bin} the parameter $\lambda$ with $\lambda/n$ and let $n\to\infty$ and write $B_n$ instead of $B$. Then we have $\psi_n(t,v)=\ee\exp\big((Q-\diag(\lambda)B_n/n)t\big)x(0)$. As $B_n/n\to 1-e^{-v}$ a.s., we obtain $\exp\big((Q-\diag(\lambda)B_n/n)t\big)\to\exp\big((Q-\diag(\lambda)(1-e^{-v}))t\big)$ a.s. Since the exponentials are bounded, we also have convergence of the expectations by dominated convergence. Replacing $-v$ with $\ii u$ gives the characteristic function.
\end{remark}

\subsection{Conditional probabilities}

Mimicking the approach of Section~\ref{section:mmmo}, we consider again the $\nu^k_t=\one_{\{N_t=k\}}$. Let   
\[
\bnu^n_t=\begin{pmatrix}
\nu^0_t \\
\vdots \\
\nu^n_t
\end{pmatrix}.
\]
Then $\bnu^n$ still satisfies Equation~\eqref{eq:nu0}. Combining this with the dynamics of $N$, we obtain the semimartingale decomposition
\[
\dd\bnu^n_t=\lambda^\top X_t(J-I)\bnu^n_t\,\dd t + \dd M_t.
\]
Letting $\bzeta^n_t=\bnu^n_t\otimes X_t$, then we can derive, similar to the approach of Section~\ref{section:mmmo},
\[
\dd\bzeta^n_t= \bq_n\bzeta^n_t\,\dd t + \dd M_t.
\]
This is for each $n$ a finite dimensional system, which can be extended to an infinite dimensional system for $\zeta_t$. The resulting infinite coefficient matrix will be lower triangular again,
\[
\dd\zeta_t=\bq_\infty\zeta_t\,\dd t +\dd M_t,
\]
where $\bq_\infty=I_\infty\otimes Q_\lambda-J_\infty\otimes \diag(\lambda)$ with $I_\infty$ the infinite dimensional identity matrix and $J_\infty$ the infinite dimensional counterpart of the earlier encountered matrix $J$.
It follows that for the vector of conditional probabilities we have 
\[
\ee[\zeta_t|\cf_s]=\exp(\bq_\infty(t-s))\bzeta_s. 
\]
This looks like an infinite dimensional expression, but  $\ee[\one_{\{N_t=n\}}X_t|\cf_s]$ can be computed from $\ee[\bar\zeta^n_t|\cf_s]=\exp(\bq_n(t-s))\bar\zeta^n_s$, which effectively reduces the infinite dimensional system to a finite dimensional one.
One can now also compute, with $\ell_n^\top=\begin{pmatrix} 0 & \cdots & 0 & 1 \end{pmatrix}\in\rr^{1\times (n+1)}$, 
\[
\pp(N_t=n,X_t=e_j|\cf_s)=(\ell_n^\top\otimes e_j^\top)\exp(\bq_n(t-s))\bzeta^n_s.
\]

\subsection{Conditional characteristic function}

Our aim is to find an expression for $\phi_{t|s}:=\ee[\exp(\ii uN_t)X_t|\cf_s]$. Since we deal in the present section with the MM Poisson process $N$, the bivariate process $(X,N)$, unlike its counterpart in Section~\ref{section:mm},  is an instance of a Markov additive process~\cite{ASM}, and $\ee[\exp(\ii u(N_t-N_s))X_t|\cf_s]$ will only depend on $X_s$. 
We first follow the forward approach.  
\begin{proposition}
It holds that
\begin{equation}\label{eq:phits}
\phi_{t|s}=\exp\left(((e^{\ii u}-1)\diag(\lambda) + Q)(t-s)\right)e^{\ii u N_{s}}X_s.
\end{equation}
\end{proposition}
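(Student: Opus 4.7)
The plan is to use the forward approach alluded to just before the statement, namely apply It\^o's formula to $\exp(\ii u N_t)X_t$, but this time keep the conditional expectation given $\cf_s$ instead of taking the full expectation. The unconditional computation in \eqref{eq:phinx} already contains the essential calculation, so the main job is to upgrade it so that it yields a matrix-valued ODE for $\phi_{t|s}$ in the variable $t$, with initial condition at $t=s$ equal to $e^{\ii u N_s}X_s$.

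Concretely, I would start from \eqref{eq:phinx} and substitute the semimartingale decompositions $\dd N_t=\lambda^\top X_t\,\dd t+\dd m_t$ and $\dd X_t=QX_t\,\dd t+\dd M^X_t$. Using $X_t\lambda^\top X_t=\diag(\lambda)X_t$ (which holds since $X_t$ is a basis vector), and the fact that $X$ and $N$ a.s.\ share no jump times, so $X_{t-}$ and $N_{t-}$ may be replaced by $X_t$ and $N_t$ in Lebesgue-a.e.\ drift integrals, I obtain
\[
\dd\bigl(e^{\ii u N_t}X_t\bigr)=\bigl((e^{\ii u}-1)\diag(\lambda)+Q\bigr)e^{\ii u N_t}X_t\,\dd t+\dd M_t,
\]
for a (local) martingale $M$. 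Writing $A=(e^{\ii u}-1)\diag(\lambda)+Q$, one then takes $\ee[\,\cdot\,|\cf_s]$ term by term; since the martingale part integrates to zero in conditional expectation, one gets
\[
\phi_{t|s}=e^{\ii u N_s}X_s+\int_s^t A\,\phi_{r|s}\,\dd r,
\]
which is a linear matrix ODE in $t$ with initial condition $\phi_{s|s}=e^{\ii u N_s}X_s$. Solving gives \eqref{eq:phits}.

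The main technical point to nail down is that the local martingale in the It\^o expansion is a genuine martingale so that conditional expectations annihilate it, and that Fubini is applicable when passing to integral form. This is straightforward here because $|e^{\ii u N_t}X_t|\le 1$ componentwise and $X$ is a finite-state jump process, so the compensated integrands in $\dd m$ and $\dd M^X$ are bounded on finite intervals, giving square-integrable martingales. A pleasant cross-check is available as a second route: since $(X,N)$ is Markov additive, $\ee[e^{\ii u(N_t-N_s)}X_t|\cf_s]$ depends only on $X_s$; combining this with the unconditional formula $\phi(t,u)=\exp(At)x(0)$ already derived immediately yields \eqref{eq:phits}, which I would mention as a consistency remark rather than the primary argument, in keeping with the paper's stated ``forward approach''.
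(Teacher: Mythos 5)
Your proposal is correct and follows essentially the same route as the paper: both start from the semimartingale decomposition \eqref{eq:phinx}, substitute the dynamics of $N$ and $X$ to obtain $\dd\,(e^{\ii u N_t}X_t)=((e^{\ii u}-1)\diag(\lambda)+Q)e^{\ii u N_t}X_t\,\dd t+\dd M_t$, condition on $\cf_s$ to get the linear ODE in $t$, and solve with initial condition $e^{\ii u N_s}X_s$. Your added care about the local martingale being a true martingale, and the Markov-additive consistency check, are fine but not needed beyond what the paper does.
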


\begin{proof}
Starting point is Equation~\eqref{eq:phinx}. We use the dynamics of $N$ and $X$ to get the semimartingale decomposition
\begin{align*}
\dd \exp(\ii uN_t)X_t & = (e^{\ii u}-1)e^{\ii u N_{t}}\diag(\lambda)X_t\,\dd t + e^{\ii u N_{t}}QX_t\,\dd t +\dd M_{t}\\
& = ((e^{\ii u}-1)\diag(\lambda) + Q)e^{\ii u N_{t}}X_t\,\dd t +\dd M_{t}.
\end{align*}
Let $t\geq s$. We obtain (differentials w.r.t.\ $t$)
\[
\dd\phi_{t|s}=((e^{\ii u}-1)\diag(\lambda) + Q)\phi_{t|s}\,\dd t,
\]
which has the desired solution.
\end{proof}
Next we outline the backward approach. Observe first that $\phi_{t|s}$ is a martingale in the $s$-parameter and that due to the fact that $(N,X)$ is Markov, we can write for some function $\Phi$, $\phi_{t|s}=\Phi(t-s,N_s)X_s$. We identify $\Phi$ as follows, using the It\^o formula w.r.t.\ $s$. We obtain
\begin{align*}
\dd \phi_{t|s} & = \left(-\dot{\Phi}(t-s,N_s)\,\dd s+ (\Phi(t-s,N_{s-}+1)-\Phi(t-s,N_{s-}))\dd N_s\right)X_{s-} \\
& \quad\mbox{}\quad +\Phi(t-s,N_{s-})\,\dd X_s \\
& = \left(-\dot{\Phi}(t-s,N_s)+ (\Phi(t-s,N_{s}+1)-\Phi(t-s,N_{s}))\diag(\lambda)\right)X_{s}\,\dd s \\
& \quad\mbox{}\quad +\Phi(t-s,N_{s})Q X_s\,\dd s+\dd M_s.
\end{align*}
The above mentioned martingale property leads to the system of ODEs ($n\geq 0$)
\begin{equation}\label{eq:phin}
\dot{\Phi}(t,n)=\Phi(t,n+1)\diag(\lambda)+\Phi(t,n)\left(Q-\diag(\lambda)\right).
\end{equation}
We have the initial conditions $\Phi(0,n)=\exp(\ii u n)$. To know $\Phi(t,n)$ it seems necessary to know $\Phi(t,n+1)$, which suggest that the ODEs are difficult to solve constructively. Instead, we pose a solution, 
we will verify that 
\[
\Phi(t,n)=\exp\left(((e^{\ii u}-1)\diag(\lambda) + Q)t\right)e^{\ii u n}.
\]
Differentiation of the given expression for $\Phi(t,n)$ gives
\[
\dot{\Phi}(t,n)= \Phi(t,n)((e^{\ii u}-1)\diag(\lambda) + Q).
\]
Note that $\Phi(t,n+1)=\Phi(t,n)e^{\ii u}$. Insertion of this into the ODE gives
\[
\dot{\Phi}(t,n)=\Phi(t,n)(e^{\ii u}\diag(\lambda)+\left(Q-\diag(\lambda)\right)),
\]
which coincides with \eqref{eq:phin}.

\section{Filtering}\label{section:filtering}

Let $N$ be a counting process with predictable intensity process $\lambda$. In many cases it is conceivable that $\lambda$ is an unobserved process and expressions in terms of $\lambda$ are not always useful. Let $\hat\lambda_t=\ee[\lambda_t|\cf^N_t]$. Then the semimartingale decomposition of $N$ w.r.t.\ the filtration $\mathbb{F}^N$ is given by
\[
\dd N_t=\hat\lambda_t\,\dd t + \dd\hat m_t,
\]
where $\hat m$ is a (local) martingale w.r.t.\ $\mathbb{F}^N$.
The general filter of the Markov chain $X$, $\hat X_t=\ee[X_t|\cf^N_t]$ satisfies the following well known formula (see~\cite{bremaud}, originating from~\cite{schuppen}) with $Q$ as in Section~\ref{section:mm1}
\[
\dd \hat{X}_t=Q\hat{X}_t\,\dd t +\hat{\lambda}_{t-}^+(\widehat{X\lambda}_{t-}-\hat{X}_{t-}\hat{\lambda}_{t-})(\dd N_t-\hat\lambda_t\,\dd t),
\] 
where $\widehat{X\lambda}_{t} =\ee[X_t\lambda_t|\cf^N_t]$ and where we use the notation $x^+=\one_{x\neq 0}/x$ for a real number $x$.
For any of the previously met models for the counting process $N$ we have a predictable intensity process of the form $\lambda_t=\lambda^\top X_{t-} f(N_{t-})$, where $f$ depends on the specific model at hand. It follows that $\hat\lambda_t=\lambda^\top\hat{X}_{t-}f(N_{t-})$. In all cases we consider it happens that $f(N_t)$ remains zero after it has reached zero, and hence $N$ stops jumping as soon as $f(N_t)=0$.  Since $\lambda^\top X_t>0$, with the convention $\frac{0}{0}=0$ the above filter equation reduces to 
\begin{equation}\label{eq:filter}
\dd \hat{X}_t=Q\hat{X}_t\,\dd t +\frac{1}{\lambda^\top\hat{X}_{t-}}(\diag(\lambda)\hat{X}_{t-}-\hat{X}_{t-}\lambda^\top\hat{X}_{t-})(\dd N_t-\hat\lambda_t\,\dd t).
\end{equation}
For the specific models we have encountered we give in the next sections more results on $\hat{X}$.

\subsection{Filtering for the MM multiple point process}\label{section:filter2}

The notation of this section is as in Section~\ref{section:mmmo} and subsequent sections. Let $\hat{\zeta}_t=\ee[\zeta_t|\cf^N_t]$. Then $\hat{\zeta}_t=\nu_t\otimes \hat{X}_t$, where $\hat{X}_t=\ee[X_t|\cf^N_t]$. For $\hat{X}_t$ we have from \eqref{eq:filter},
\[
\dd\hat{X}_t=Q\hat{X}_t\,\dd t+\frac{1}{\lambda^\top \hat{X}_{t-}}\left(\diag(\lambda)\hat{X}_{t-}- \hat{X}_{t-}\hat{X}_{t-}^\top\lambda\right)\,(\dd N_t-(n-N_t)\lambda^\top \hat{X}_{t}\,\dd t).
\]
At the jump times $\tau_k$ ($k=1,\ldots,n$) (these are the order statistics of the original default times $\tau^i$) of $N$ we thus have 
\[
X_{\tau_k}=\frac{1}{\lambda^\top \hat{X}_{\tau_k-}}\diag(\lambda)\hat{X}_{\tau_k-}
\]
Between the jump times, $\hat{X}$ evolves according to the ODE
\[
\frac{\dd\hat{X}_t}{\dd t}=Q\hat{X}_t-(n-N_t)(\diag(\lambda)\hat{X}_{t-}- \hat{X}_{t-}\hat{X}_{t-}^\top\lambda),
\]
which is also valid after the last jump of $N$. It follows that for $t\geq \tau_n$ we have $\hat{X}_t=\exp(Q(t-\tau_n))\hat{X}_{\tau_n}$.
\medskip\\
Below we need $[\nu,\hat{X}]^\otimes_t=\sum_{s\leq t}\Delta \nu_s\otimes \Delta\hat{X}_s$. Using the equations for $\nu$ and $\hat{X}$, we find
\[
\dd[\nu,\hat{X}]^\otimes_t=\frac{1}{\lambda^\top\hat{X}_{t-}}((J-I)\otimes (\diag(\lambda)-\lambda^\top\hat{X}_{t-}I))\hat{\zeta}_{t-}\dd N_t.
\]
For $\hat{\zeta}_t$ we have, using the product formula for tensors,
\[
\dd\hat{\zeta}_t = \dd\nu_t\otimes \hat{X}_{t-}+\nu_{t-}\otimes \dd\hat{X}_t+\dd[\nu,\hat{X}]^\otimes_t.
\]
This yields after some tedious computations the following semimartingale decomposition for $\hat\zeta$
\begin{align*}
\dd\hat{\zeta}_t & =\big(I\otimes Q+(n-N_t)(J-I)\otimes\diag(\lambda)\big)\hat{\zeta}_{t}\,\dd t \\
& \quad \mbox{ } +\frac{1}{\lambda^\top\hat{X}_{t-}}\big(J\otimes\diag(\lambda)-\lambda^\top\hat{X}_{t-}I\otimes I\big)\hat{\zeta}_{t-}\,\dd \hat{m}_t \\
& = \bq\hat\zeta_t\,\dd t + \frac{1}{\lambda^\top\hat{X}_{t-}}\big(J\otimes\diag(\lambda)-\lambda^\top\hat{X}_{t-}I\otimes I\big)\hat{\zeta}_{t-}\,\dd \hat{m}_t,
\end{align*}
where $\dd \hat{m}_t=\dd N_t-(n-N_t)\lambda^\top \hat{X}_{t}\,\dd t$ and $\bq$ as in Section~\ref{section:mmmo}.
\medskip\\
Here are two applications. One can now compute 
\[
\pp(N_t=k|\cf^N_s)=\one^\top\ee[\zeta^k_{t|s}|\cf^N_s]=\one^\top\hat\zeta^k_{t|s}, 
\]
for which we can  use $\hat\zeta_{t|s}=\exp(\bq(t-s))\hat\zeta_s$.
Formula \eqref{eq:cfmmn}  yields for the conditional characteristic function of $N_t$ given its own past until time $s<t$ the explicit expression
\[
\ee[e^{\ii uN_t}|\cf^N_s]=\sum_{k=0}^n\sum_{j=k}^n {n-k \choose j-k}(1-e^{\ii u})^{n-j}e^{\ii uj}\one^\top\exp(Q_{(n-j)\lambda} (t-s))\hat{X}_s\nu^k_s.
\]
In case $n=1$ the above formulas simplify considerably. Here are a few examples, where we use the notation of Section~\ref{section:mm1}. Suppose that only $Y$ is observed. Let $\cf^Y_t=\sigma(Y_s,0\leq s\leq t)$. With $Z_t:=Y_tX_t$ we want to compute  $\hat{Z}_{t|s} :=\ee[Z_t|\cf^Y_s]$ for $t\geq s$. Let $\hat{X}_t=\ee[X_t|\cf^Y_t]$, then obviously, $\hat{Z}_{t|s}=\hat{X}_{t|s}Y_s$. Moreover,  one has from \eqref{eq:zts}
\begin{align*}
\hat{Z}_{t|s} 
& = \exp(Q(t-s))\hat{X}_s-\exp(Q_\lambda (t-s))\hat{X}_{s}(1-Y_s).
\end{align*}
As a consequence we have for $\hat{Y}_{t|s}=\one^\top \hat{Z}_{t|s}$
\[
\hat{Y}_{t|s}= 1-\one^\top \exp(Q_\lambda (t-s))\hat{X}_{s}(1-Y_s).
\]

\subsection{Filtering for the MM Poisson process}

The filter equations now take the familiar form
\[
\dd\hat{X}_t=Q\hat{X}_t\,\dd t+\frac{1}{\lambda^\top \hat{X}_{t-}}\left(\diag(\lambda)\hat{X}_{t-}- \hat{X}_{t-}\hat{X}_{t-}^\top\lambda\right)\,(\dd N_t-\lambda^\top \hat{X}_{t}\,\dd t).
\]
For $\bar\nu_t$ we have the infinite dimensional analogue of \eqref{eq:nu0}. This leads for $\hat\zeta_t=\bar\nu_t\otimes \hat{X}_t$ as in a Section~\ref{section:filter2} to
\[
\dd\hat{\zeta}_t  = \bq_\infty\hat\zeta_t\,\dd t + \frac{1}{\lambda^\top\hat{X}_{t-}}\big(J_\infty\otimes\diag(\lambda)-\lambda^\top\hat{X}_{t-}I_\infty\otimes I_\infty\big)\hat{\zeta}_{t-}\,(\dd N_t-\lambda^\top \hat{X}_{t}\,\dd t).
\]
Note that this system is infinite dimensional, but for each $n$ we also have for $\hat{\bar{\zeta}}^n_t=\ee[\hat\zeta^n_t|\cf^N_t]$ the truncated finite dimensional system
\[
\dd\hat{\bar{\zeta}}^n_t  = \bq_n\hat{\bar\zeta}^n_t\,\dd t + \frac{1}{\lambda^\top\hat{X}_{t-}}\big(J\otimes\diag(\lambda)-\lambda^\top\hat{X}_{t-}I\otimes I\big)\hat{\bar{\zeta}}^n_{t-}\,(\dd N_t-\lambda^\top \hat{X}_{t}\,\dd t).
\]
For the conditional characteristic function $\ee[\exp(\ii u N_t)X_t|\cf^N_s]$ we have
\[
\ee[\exp(\ii u N_t)X_t|\cf^N_s]=\exp\left(((e^{\ii u}-1)\diag(\lambda) + Q)(t-s)\right)e^{\ii u N_{s}}\hat{X}_s,
\]
whereas $\psi_t=e^{\ii u N_{t}}\hat{X}_t$ satisfies the equation ($\dd\hat m_t=\dd N_t-\lambda^\top\hat X_t\,\dd t$)
\[
\dd\psi_t=(\frac{e^{\ii u}}{\lambda^\top\hat X_{t-}}\diag(\lambda)-I)\psi_{t-}\dd\hat m_t + \left(Q+(e^{\ii u}-1)\diag(\lambda)\right)\psi_t\,\dd t.
\]

\section{Rapid switching}\label{section:rapid}

In this section we present some auxiliary results that we shall use in obtaining limits for the various default processes when the Markov chain evolves under a rapid switching regime, i.e.\ the transition matrix $Q$ will be replaced with $\alpha Q$, where $\alpha >0$ tends to infinity. In the first two results and their proofs we use the notation $C(M)$ for the matrix of cofactors of a square matrix $M$. Throughout this section we write $\lambda_\infty$ for $\lambda^\top\pi$.

\begin{lemma}\label{lemma:C}
Let $Q$ have a unique  invariant vector $\pi$. Then 
\[
C(Q)=q\,\pi\one^\top,
\]
where the constant $q$ can be computed as $\det(\hat{Q})$, where $\hat{Q}$ is obtained from $Q$ by replacing its last row with $\one^\top$.
\end{lemma}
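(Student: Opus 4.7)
The plan is to exploit the classical adjugate identity $Q\,C(Q) = C(Q)\,Q = \det(Q)\,I$ together with the two null-vector identities $Q\pi = 0$ and $\one^\top Q = 0$. Because the columns of $Q$ sum to zero its rows are linearly dependent, so $\det(Q) = 0$ and the adjugate identity collapses to the two-sided statement $Q\,C(Q) = C(Q)\,Q = 0$.

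From here the rank-one shape of $C(Q)$ follows by pure linear algebra. The relation $Q\,C(Q) = 0$ forces every column of $C(Q)$ to lie in $\ker Q$, which by the assumed uniqueness of the invariant vector equals $\mathrm{span}(\pi)$, so $C(Q) = \pi\,v^\top$ for some row vector $v^\top$. The relation $C(Q)\,Q = 0$ forces every row of $C(Q)$ to lie in the left null space of $Q$, which equals $\mathrm{span}(\one^\top)$; applied to $\pi v^\top$ this pins $v^\top$ down to a scalar multiple of $\one^\top$. Combining the two pieces yields $C(Q) = q\,\pi\one^\top$ for some single scalar $q$, which is the structural part of the claim.

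It remains only to identify $q$. Left-multiplying the rank-one identity by $\one^\top$ and using $\one^\top\pi = 1$ gives $\one^\top C(Q) = q\,\one^\top$, so every column of $C(Q)$ has entries summing to $q$. Taking the last column and expanding $\det(\hat Q)$ by Laplace along its row $d$ finishes the proof: since $\hat Q$ differs from $Q$ only in the last row, the $(d,i)$-minor of $\hat Q$ coincides with the corresponding minor of $Q$, and since $\hat Q_{di} = 1$ for every $i$ the expansion reduces to $\det(\hat Q) = \sum_{i=1}^{d} (-1)^{i+d}\det\bigl(Q^{(d,i)}\bigr)$, which is exactly the sum of the entries in the $d$-th column of the adjugate, hence equals $q$.

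The only delicate point is a bookkeeping one: the stated shape $C(Q) = q\,\pi\one^\top$ is consistent with $C(\cdot)$ being read as the classical adjugate (so that multiplication by $Q$ on either side yields $\det(Q)\,I$); once that convention is adopted, the two-sided kernel argument for the shape and the Laplace identification of $q$ both go through with no further calculation.
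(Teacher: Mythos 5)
Your proof is correct, and its structural half is exactly the paper's argument: from $QC(Q)=C(Q)Q=\det(Q)I=0$ and the one-dimensionality of the right and left null spaces ($\mathrm{span}(\pi)$ and $\mathrm{span}(\one^\top)$) you conclude $C(Q)=q\,\pi\one^\top$; the paper gets the kernel dimension from its standing irreducibility assumption rather than from uniqueness of $\pi$, which is an equivalent level of justification here. Where you genuinely deviate is the identification of $q$: the paper solves $\hat{Q}\pi=e_d$ by Cramer's rule, obtaining $\pi_d=C_{dd}/\det(\hat{Q})$, and matches this against $C_{dd}=q\pi_d$, whereas you left-multiply by $\one^\top$ to see that every column of $C(Q)$ sums to $q$ and then recognize $\det(\hat{Q})$, Laplace-expanded along the replaced row (whose minors agree with those of $Q$ and whose entries are all $1$), as precisely that column sum. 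Both routes are elementary determinant bookkeeping; yours avoids Cramer's rule and makes it slightly more transparent why replacing a row of $Q$ by $\one^\top$ is the right normalization, while the paper's gets away with inspecting the single entry $C_{dd}$. Your closing caveat about conventions is also well taken: for the literal matrix of cofactors one would obtain $q\,\one\pi^\top$, and it is the adjugate reading (the one satisfying $QC(Q)=C(Q)Q=\det(Q)I$) that yields $q\,\pi\one^\top$ and that the paper in fact uses downstream, e.g.\ when $C/\det$ is treated as an inverse in the proof of Proposition~\ref{prop:inv}.
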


\begin{proof}
Note first that $\pi$ can be obtained as the solution to $\hat{Q}\pi=e_d$, where $e_d$ is the last basis vector of $\rr^d$. By Cramer's rule $\pi$ can be expressed using the cofactors of $\hat{Q}$. In particular, $\pi_d=\hat{C}_{dd}/\det(\hat{Q})$, where $\hat{C}$ is the cofactor matrix of $\hat{Q}$. But $\hat{C}_{dd}=C_{dd}$, so $\pi_d=C_{dd}/\det(\hat{Q})$.

Write $C=C(Q)$ and recall that $CQ=\det(Q)$ and hence zero. It follows that every row of $C$ is a left eigenvector of $Q$. Since $Q$ has rank $d-1$ by its assumed irreducibility, every row of $C$ is a multiple of $\one^\top$. Hence $C=\alpha\one^\top$, for some $\alpha\in\rr^{d\times 1}$. By similar reasoning,  $C=\pi\beta$ for some $\beta\in\rr^{1\times d}$. We conclude that $C=q\pi\one^\top$ for some real constant $q$. Use now $C_{dd}=q\pi_d$ and the above expression for $\pi_d$ to arrive at $q=\det(\hat{Q})$.
\end{proof}

\begin{proposition}\label{prop:inv}
Let $Q$ have a unique invariant vector $\pi$ and let all $\lambda_i$ be positive. Then $(\alpha Q-\diag(\lambda))^{-1}\to -\frac{\pi\one^\top}{\lambda_\infty}$ for $\alpha\to\infty$.
\end{proposition}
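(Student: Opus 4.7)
The plan is to use the adjugate--determinant formula $M^{-1}=C(M)/\det(M)$, with $C$ the matrix of cofactors in the sense of Lemma~\ref{lemma:C}, and to read off the leading-order behaviour in $\alpha$ of the numerator and denominator separately. Setting $M_\alpha := \alpha Q - \diag(\lambda)$, I would first observe that every entry of $C(M_\alpha)$ is a $(d-1)\times(d-1)$ determinant whose entries are affine in $\alpha$; by multilinearity, the coefficient of $\alpha^{d-1}$ in $C(M_\alpha)_{ij}$ is precisely the corresponding entry of $C(Q)$. Applying Lemma~\ref{lemma:C} then gives
\[
C(M_\alpha) = \alpha^{d-1}\,C(Q) + O(\alpha^{d-2}) = \alpha^{d-1}\,q\,\pi\one^\top + O(\alpha^{d-2}).
\]

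For the denominator, $\det(M_\alpha)$ is a priori a polynomial in $\alpha$ of degree $d$, but its $\alpha^d$-coefficient is $\det(Q)=0$, so the effective leading order is $\alpha^{d-1}$. To identify that coefficient I would expand by row multilinearity (row $i$ of $M_\alpha$ equals $\alpha Q_{i,\cdot} - \lambda_i e_i^\top$): the $\alpha^{d-1}$ contributions come exactly from selecting the $-\lambda_i e_i^\top$ part in a single row $i$ and the $\alpha Q_{j,\cdot}$ parts in the other rows. A Laplace expansion along row $i$ collapses each such determinant to the $(i,i)$-minor $\det(Q^{(i,i)})=C(Q)_{ii}=q\pi_i$, again by Lemma~\ref{lemma:C}, and summing over $i$ yields
\[
\det(M_\alpha) = -\alpha^{d-1}\sum_{i=1}^d \lambda_i\,q\,\pi_i + O(\alpha^{d-2}) = -\alpha^{d-1}\,q\,\lambda_\infty + O(\alpha^{d-2}).
\]

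Dividing the two expansions and letting $\alpha\to\infty$ will give
\[
(\alpha Q - \diag(\lambda))^{-1} = \frac{q\pi\one^\top + O(\alpha^{-1})}{-q\lambda_\infty + O(\alpha^{-1})} \longrightarrow -\frac{\pi\one^\top}{\lambda_\infty},
\]
as claimed. The scalar $q$ cancels, so the proof is not sensitive to its precise value; one does need $q\neq 0$, which I would justify by the observation that otherwise $\one^\top$ would lie in the row span of $Q$, forcing $1=\one^\top\pi=0$. The main obstacle, and the only genuinely delicate point, is matching the sign and index bookkeeping in the Laplace expansion against the diagonal entries of $C(Q)$ delivered by Lemma~\ref{lemma:C}; once that is done, the factor $q$ drops out and the stated limit emerges.
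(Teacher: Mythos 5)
Your proposal is correct and follows essentially the same route as the paper: write $(\alpha Q-\diag(\lambda))^{-1}$ as cofactor matrix over determinant, identify the $\alpha^{d-1}$ leading terms of both (the determinant's $\alpha^d$ term vanishing since $\det(Q)=0$, its $\alpha^{d-1}$ coefficient being $-\sum_i\lambda_i C(Q)_{ii}$), and invoke Lemma~\ref{lemma:C} to pass to the limit. Your explicit check that $q=\det(\hat Q)\neq 0$ is a small point the paper leaves implicit, and it is argued correctly.
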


\begin{proof}
We have seen in Section~\ref{section:mm1} that $Q-\diag(\lambda)$ is invertible if all $\lambda_i>0$ and so the same is true for $\alpha Q-\diag(\lambda)$. Both  $\det(\alpha Q-\diag(\lambda))$ and  the cofactor matrix of $\alpha Q-\diag(\lambda)$ are polynomials in $\alpha$ and we compute the leading term. The determinant is computed by summing products of elements of $\alpha Q-\diag(\lambda)$, from each row and each column one. The $\alpha^d$ term in this determinant has coefficient $\det(Q)$, which is zero. Consider the term with $\alpha^{d-1}$. It is seen to be equal to $-\sum_{i=1}^d\lambda_iC(\alpha Q-\diag(\lambda))_{ii}=-\alpha^{d-1}\sum_{i=1}^d\lambda_iC(Q-\diag(\lambda/\alpha))_{ii}$. For the cofactor matrix itself a similar procedure applies. We get $C(\alpha Q-\diag(\lambda))=\alpha^{d-1}C(Q-\diag(\lambda)/\alpha)$ and it results from Lemma~\ref{lemma:C} that for $\alpha\to\infty$
\[
\frac{C(\alpha Q-\diag(\lambda))}{\det(\alpha Q-\diag(\lambda))}\to\frac{C(Q)}{-\sum_{i=1}^d\lambda_iC(Q)_{ii}}=-\frac{q\pi\one^\top}{q\sum_{i=1}^n\lambda_i\pi_i}=-\frac{\pi\one^\top}{\lambda_\infty}.
\]
\end{proof}

\begin{proposition}\label{proposition:exp}
For $\alpha\to\infty$ it holds that 
\[
\exp\big((\alpha Q-\diag(\lambda))t\big)\to\exp(-\lambda_\infty t)\pi\one^\top.
\]
\end{proposition}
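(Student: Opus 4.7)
The plan is to pass through Laplace transforms. Observe that $M(\alpha) := \alpha Q - \diag(\lambda)$ is the generator of a sub-Markovian semigroup: its off-diagonal entries $\alpha Q_{ij}$ are nonnegative, and column $i$ sums to $-\lambda_i \leq 0$. Consequently $\exp(M(\alpha)t)$ is a sub-stochastic matrix, with entries in $[0,1]$ uniformly in $\alpha \geq 0$ and $t \geq 0$, and for every $s > 0$ one has the resolvent identity
\[
\int_0^\infty e^{-st}\exp\bigl(M(\alpha)t\bigr)\,\dd t = \bigl(sI - M(\alpha)\bigr)^{-1} = -\bigl(\alpha Q - \diag(\lambda + s\one)\bigr)^{-1}.
\]

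The key step is to invoke Proposition~\ref{prop:inv} with $\lambda$ replaced by $\lambda + s\one$, whose entries are strictly positive. Since $\pi$ is still the invariant vector of $Q$ and $(\lambda+s\one)^\top\pi = \lambda_\infty + s$, the proposition gives, for each fixed $s > 0$,
\[
\bigl(sI - M(\alpha)\bigr)^{-1} \longrightarrow \frac{\pi\one^\top}{s+\lambda_\infty} \quad (\alpha \to \infty),
\]
which is precisely the Laplace transform of $t \mapsto e^{-\lambda_\infty t}\pi\one^\top$. So the candidate limit has the correct Laplace transform.

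The remaining task — lifting this resolvent convergence to pointwise convergence of the matrix exponentials themselves — is the main obstacle. A concrete realisation uses the Post--Widder (Hille) formula
\[
\exp(M(\alpha)t) = \lim_{n\to\infty}\bigl(I - (t/n)M(\alpha)\bigr)^{-n},
\]
combined with the idempotence $(\pi\one^\top)^n = \pi\one^\top$ (a consequence of $\one^\top \pi = 1$). Formally exchanging the two limits yields
\[
\lim_{n\to\infty}\lim_{\alpha\to\infty}\bigl(I - (t/n)M(\alpha)\bigr)^{-n} = \lim_{n\to\infty}\bigl(1 + t\lambda_\infty/n\bigr)^{-n}\pi\one^\top = e^{-\lambda_\infty t}\pi\one^\top.
\]
The delicate point is justifying the interchange of $\alpha \to \infty$ and $n \to \infty$; this is secured by the uniform contraction bound on the sub-Markov semigroups together with a finite-dimensional Trotter--Kato type argument. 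Once that interchange is in hand, the stated convergence follows.
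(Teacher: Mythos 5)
Your reduction to resolvent convergence rests on the same key ingredient as the paper's proof: Proposition~\ref{prop:inv}, applied with $\lambda$ replaced by $\lambda+s\one$, indeed gives $(sI-M(\alpha))^{-1}\to\pi\one^\top/(s+\lambda_\infty)$. The paper uses that same proposition, but along a complex contour, and then passes the limit through the Cauchy-integral representation $\exp(Mt)=\frac{1}{2\pi\ii}\oint_\Gamma(zI-M)^{-1}e^{tz}\,\dd z$, which converts resolvent convergence into convergence of the matrix exponential in one stroke. Your route via the real Laplace transform and the Post--Widder formula is genuinely different, but as written it has a gap at exactly the point you yourself flag as delicate: the interchange of the limits in $n$ and $\alpha$ is asserted, not proved. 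The uniform sub-stochasticity bound gives only uniform boundedness of $(I-(t/n)M(\alpha))^{-n}$ and of $\exp(M(\alpha)t)$; it does not make the Post--Widder approximation uniform in $\alpha$ (for fixed $n$ the standard error estimates involve quantities like $t^2\|M(\alpha)^2\|/n$, which grow like $\alpha^2$), so nothing in the proposal actually licenses the exchange.

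Nor can the ``finite-dimensional Trotter--Kato type argument'' be cited off the shelf. The classical Trotter--Kato theorem requires the limiting pseudo-resolvent to be the resolvent of a generator, i.e.\ $sR(s)\to I$ as $s\to\infty$; here $R(s)=\pi\one^\top/(s+\lambda_\infty)$ has rank one and $sR(s)\to\pi\one^\top\neq I$. This reflects that the limit family $e^{-\lambda_\infty t}\pi\one^\top$ is a degenerate semigroup whose value as $t\downarrow 0$ is the projection $\pi\one^\top$ rather than $I$, so the claimed convergence can only hold for fixed $t>0$ and never uniformly down to $t=0$; consequently any Trotter--Kato argument must be of the degenerate (Kurtz-type) variety, which you neither state nor verify. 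To close the gap you would either need such a theorem, or, more simply in this finite-dimensional setting, a direct spectral argument: for fixed $t>0$ all eigenvalues of $\alpha Q-\diag(\lambda)$ except the one converging to $-\lambda_\infty$ have real parts tending to $-\infty$, and the spectral projection attached to the surviving eigenvalue converges to $\pi\one^\top$ --- which is close in spirit to what the paper's contour-integral proof encodes. Until one of these is supplied, the crucial step from resolvent convergence to convergence of the exponentials is missing.
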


\begin{proof} 
For any analytic function $f:\cc\to\cc$, $f(z)=\sum_{k=0}^\infty a_kz^k$, one defines $f(M):=\sum_{k=0}^\infty a_kM^k$ for $M\in\cc^{d\times d}$ (assuming that the power series converges on the spectrum of $M$). It then holds (see also Higham~\cite[Definition~1.11]{higham}, where this is taken as a definition of $f(M)$) that
\[
f(M)=\frac{1}{2\pi\ii}\oint_\Gamma (zI-M)^{-1}f(z)\,\dd z,
\] 
where $\Gamma$ is a closed contour such that all eigenvalues of $M$ are inside it. Take $M=\alpha Q-\diag(\lambda)$. It follows from Proposition~\ref{prop:inv}, note that also $\lambda_\infty$ lies inside $\Gamma$ as it is a convex combination of the $\lambda_i$, that $(zI-\alpha  Q+\diag(\lambda))^{-1}\to\frac{1}{z+\lambda_\infty}\pi\one^\top$. Hence
\[
f(\alpha Q-\diag(\lambda))\to\pi\one^\top f(-\lambda_\infty).
\]
Apply this to $f(z)=\exp(tz)$.
\end{proof}

\subsection{Rapid switching for the MM multiple point process}

Suppose we scale the $Q$ matrix with $\alpha\geq 0$, and we let $X^\alpha$ be Markov with transition matrix $\alpha Q$. Many (random) variables below will be indexed by $\alpha$ as well. Here is a way to get accelerated dynamics for $N^\alpha_t$ (previously denoted $N_t$).

Suppose that one takes the original Markov chain $X$ and replaces the dynamics of $N$ with one in which $X$ is accelerated,
\begin{equation}\label{eq:yalpha}
N^\alpha_t=\int_0^t(n-N^\alpha_s)\lambda^\top X_{\alpha s}\,\dd s +m_t.
\end{equation}
Indeed the process $X^\alpha$ defined by $X^\alpha_t=X_{\alpha t}$ has intensity matrix $\alpha Q$, and its invariant measure is $\pi$ again. Recall that, conditionally on $\cf^X$, $N^\alpha_t$ has a Bin$(n,1-\exp(-\int_0^t\lambda^\top X_{\alpha s}\,\dd s))$ distribution and that its unconditional distribution is Bin$(n,1-\ee\exp(-\int_0^t\lambda^\top X_{\alpha s}\,\dd s))$. 

The ergodic property of $X$ gives $\int_0^tX_{\alpha s}\,\dd s = \frac{1}{\alpha}\int_0^{\alpha t}X_s\,\dd s\to \pi t$ a.s.\ and hence by dominated convergence for the expectations  we have that the limit distribution of $N^\alpha_t$ for $\alpha\to\infty$ is Bin$(n,1-\exp(-\lambda_\infty t))$. One immediately sees that the default times $\tau^{\alpha,k}$ convergence in distribution to $\tau^k$ that are independent and have an exponential distribution with parameter $\lambda_\infty$. 
Keeping this in mind, the other results in this section are easily understandable. 
\medskip\\
We  recall the content of Proposition~\ref{proposition:exp}. Replacing $\lambda$ with $k\lambda$ for $k\geq 0$ (zero included) yields
\begin{equation}\label{eq:qn}
\exp\big((\alpha Q-k\diag(\lambda))t\big)\to\exp(-k\lambda_\infty t)\pi\one^\top.
\end{equation}
To express the dependence of the matrix $\bq$ given by \eqref{eq:bq} on $\alpha $ in the present section, we write $\bq^\alpha $ (so $\bq^\alpha=A\otimes \diag(\lambda) + I\otimes \alpha Q$) and $F^\alpha (t)$ instead of $F(t)$ as given in Lemma~\ref{lemma:diag}. 

\begin{lemma}\label{lemma:f}
The solution $F^\alpha $ to the equation $\dot{F}=\bq^\alpha F$, has for $\alpha \to\infty$  limit $F^\infty$ given by its blocks
\[
F^\infty_{ij}(t)=f^\infty_{ij}(t)\pi\one^\top,
\]
where the $f^\infty_{ij}(t)$ are the binomial probabilities on $n-i$ `successes' of a Bin$(n-j,\exp(-\lambda_\infty t))$ distribution, 
\[
f^\infty_{ij}(t)={n-j \choose n-i} \exp(-(n-i)\lambda_\infty t)(1-\exp(-\lambda_\infty t))^{i-j}.
\]
\end{lemma}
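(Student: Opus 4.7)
The plan is to build directly on the earlier explicit formula for $F(t) = \exp(\bq t)$ and apply the matrix-exponential limit of Proposition~\ref{proposition:exp} termwise, reducing the problem to a one-line binomial identity.

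First I would recall, from Lemma~\ref{lemma:diag} applied with the Markov chain scaled by $\alpha$ (so that $Q_{(n-k)\lambda}$ is replaced by $\alpha Q - (n-k)\diag(\lambda)$), that
\[
F^\alpha_{ij}(t) = {n-j \choose n-i}\sum_{k=j}^{i}(-1)^{i-k}{i-j \choose i-k}\exp\!\big((\alpha Q-(n-k)\diag(\lambda))t\big),
\]
for $i\geq j$, and $F^\alpha_{ij}(t)=0$ for $i<j$. Since the sum has finitely many terms and each term converges by Proposition~\ref{proposition:exp} (cf.\ equation~\eqref{eq:qn}) to $\exp(-(n-k)\lambda_\infty t)\pi\one^\top$, the limit $F^\infty_{ij}(t)$ exists and factors as $f^\infty_{ij}(t)\,\pi\one^\top$ with
\[
f^\infty_{ij}(t)={n-j \choose n-i}\sum_{k=j}^{i}(-1)^{i-k}{i-j \choose i-k}\exp(-(n-k)\lambda_\infty t).
\]

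The remaining step is to identify the sum with a binomial probability. Setting $\xi := \exp(-\lambda_\infty t)$ and substituting $\ell = i-k$ (so $\ell$ runs from $0$ to $i-j$),
\[
\sum_{k=j}^{i}(-1)^{i-k}{i-j \choose i-k}\xi^{n-k}
= \xi^{n-i}\sum_{\ell=0}^{i-j}{i-j \choose \ell}(-\xi)^{\ell}
= \xi^{n-i}(1-\xi)^{i-j},
\]
by the binomial theorem. Substituting back and multiplying by ${n-j\choose n-i}$ gives precisely the stated expression
\[
f^\infty_{ij}(t)={n-j \choose n-i}\exp(-(n-i)\lambda_\infty t)(1-\exp(-\lambda_\infty t))^{i-j},
\]
which is the probability of $n-i$ successes in $n-j$ independent trials with success probability $\exp(-\lambda_\infty t)$.

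There is no real obstacle here: the only nontrivial ingredient is Proposition~\ref{proposition:exp}, already proved, and the rest is a binomial collapse. One sanity check worth keeping in mind is that the limit $F^\infty(t)$ inherits the block lower-triangular structure of $F^\alpha(t)$ automatically, and the probabilistic interpretation matches the intuition of the text: under rapid switching the intensity averages to $\lambda_\infty$, so conditional on starting from $j$ defaults, the number of additional defaults in $[0,t]$ becomes $\mathrm{Bin}(n-j,1-e^{-\lambda_\infty t})$, exactly what $f^\infty_{ij}(t)$ records.
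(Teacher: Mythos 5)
Your proof is correct and follows essentially the same route as the paper: substitute $\alpha Q$ into the explicit block formula of Lemma~\ref{lemma:diag}, pass to the limit termwise via Proposition~\ref{proposition:exp} (equation~\eqref{eq:qn}), and collapse the alternating sum by the binomial theorem. The only difference is cosmetic (your substitution $\ell=i-k$ versus the paper's factoring out of $(-1)^{i-j}\exp(-(n-j)\lambda_\infty t)$), so nothing further is needed.
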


\begin{proof}
We depart from Lemma~\ref{lemma:diag} and the expression for $F^\alpha_{ij}(t)$ given there when we replace $Q$ with $\alpha Q$. Taking limits for $\alpha \to\infty$ yields
\begin{align*}
F^\infty_{ij}(t) & = {n-j \choose n-i} \sum_{k=j}^i(-1)^{i-k} {i-j\choose i-k}\exp(-(n-k)\lambda_\infty t)\pi\one^\top \\
& = {n-j \choose n-i} (-1)^{i-j}\exp(-(n-j)\lambda_\infty t)\sum_{l=0}^{i-j} {i-j\choose l}(-\exp(\lambda_\infty t))^l\pi\one^\top \\
& = {n-j \choose n-i} \exp(-(n-i)\lambda_\infty t)(1-\exp(-\lambda_\infty t))^{i-j}\pi\one^\top,
\end{align*}
from which the assertion follows.
\end{proof}

\begin{remark}
One can also use this proposition to show that $N^\alpha_t$  in the limit has the  Bin$(n,1-\exp(-\lambda_\infty t))$ distribution. Indeed,
since $\nu^i_0=\delta_{i0}$, we get $\pp(N^\alpha_t=i,X_t=e_j)\to F^\infty_{i0}(t)=f^\infty_{i0}(t)\pi$ and hence $\pp(N^\alpha_t=i)\to f^\infty_{i0}(t)$. 
\end{remark}
For conditional probabilities one has the following result.

\begin{corollary}
Let $N$ be a process like in Equation~\eqref{eq:bin}, with $\lambda$ replaced with $\lambda_\infty$. For $\alpha \to\infty$ one has in the limit $\zeta^i_{t|s}=0$ for $i<N_s$ and  for $i\geq N_s$
\[
\zeta^i_{t|s}={n-N_s \choose n-i} \exp(-(n-i)\lambda_\infty\,(t-s))(1-\exp(-\lambda_\infty\,(t-s)))^{i-N_s}   \pi.
\]
It follows that, conditional on $\cf_s$, $N_t-N_s$ has a Bin$(n-N_s,1-\exp(-\lambda_\infty\,(t-s)))$ distribution. In fact, one has weak convergence of the $N^\alpha$ to $N$.
\end{corollary}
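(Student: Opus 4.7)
The plan is to start from the explicit formula for $\zeta^k_{t|s}$ given in Proposition~\ref{prop:zetak}, replace $Q$ by $\alpha Q$, and pass to the limit term by term using \eqref{eq:qn}. Since exactly one of the indicators $\nu^j_s$ is non‑zero, the outer sum collapses to the single term $j=N_s$ and $\zeta^j_s=X_s$, so $\zeta^i_{t|s}$ automatically vanishes for $i<N_s$. For $i\geq N_s$, each matrix exponential $\exp((\alpha Q-(n-k)\diag(\lambda))(t-s))$ appearing in the formula converges by \eqref{eq:qn} to $\exp(-(n-k)\lambda_\infty(t-s))\pi\one^\top$, and because $\one^\top X_s=1$, every such term produces the scalar $\exp(-(n-k)\lambda_\infty(t-s))$ times the vector $\pi$.

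The remaining task is purely combinatorial: simplify
\[
\sum_{k=N_s}^{i}(-1)^{i-k}{i-N_s\choose i-k}\exp(-(n-k)\lambda_\infty(t-s))
\]
by the same binomial manipulation used in the proof of Lemma~\ref{lemma:f}, namely factor out $\exp(-(n-i)\lambda_\infty(t-s))$ (with an alternating sign), set $l=i-k$, and recognize $\sum_{l=0}^{i-N_s}{i-N_s\choose l}(-\exp(-\lambda_\infty(t-s)))^{i-N_s-l}\cdot 1^l$ as $(1-\exp(-\lambda_\infty(t-s)))^{i-N_s}$ by the binomial theorem. Multiplying by the prefactor ${n-N_s\choose n-i}$ gives the claimed expression for $\zeta^i_{t|s}$. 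Taking $\one^\top$ on both sides (and using $\one^\top\pi=1$) immediately yields the Bin$(n-N_s,1-\exp(-\lambda_\infty(t-s)))$ law for $N_t-N_s$ conditional on $\cf_s$, which matches the transition kernel of the process $N$ defined by \eqref{eq:bin} with constant intensity $\lambda_\infty$.

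For weak convergence $N^\alpha\Rightarrow N$ in the Skorokhod space, I would proceed by the standard two‑step argument. First, the finite‑dimensional distributions of $N^\alpha$ converge to those of $N$: by the Markov property of $(\nu^\alpha,X^\alpha)$ (Proposition~\ref{prop:zeta}), multi‑time joint probabilities factor through products of the conditional kernels just computed, and the limit kernel does not depend on the $X$‑coordinate, so iterated limits produce exactly the law of the pure‑birth chain $N$. Second, tightness is essentially free: each $N^\alpha$ is non‑decreasing, integer‑valued, and uniformly bounded by $n$, and its jumps are of size one, so Aldous' criterion (or an explicit bound on the modulus of continuity in the $J_1$ topology using the compensator $\int_0^t(n-N^\alpha_s)\lambda^\top X^\alpha_s\,\dd s$, which is uniformly Lipschitz with constant $n\max_i\lambda_i$) yields tightness on any $[0,T]$.

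The main obstacle is ensuring that the limit kernel obtained from the $\alpha\to\infty$ computation genuinely characterises a Markov process in its own right and that the convergence of finite‑dimensional conditional laws carries over to unconditional joint laws; the point is that $\zeta^i_{t|s}$ in the limit depends on $\cf_s$ only through $N_s$, so the joint process $(X^\alpha,N^\alpha)$ ``loses memory'' of $X$ in the limit. Once this is observed the finite‑dimensional convergence and tightness fit together without difficulty.
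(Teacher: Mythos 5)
Your argument for the explicit limit formula is essentially the paper's: you pass to the limit in the conditional-probability formula of Proposition~\ref{prop:zetak} using \eqref{eq:qn} and then carry out the binomial-theorem simplification, which is exactly the content of Lemma~\ref{lemma:f}; the paper simply invokes that lemma via $\zeta^i_{t|s}=\sum_j F^\infty_{ij}(t-s)\zeta^j_s$ and collapses the sum to $j=N_s$, as you do. Taking $\one^\top$ to read off the $\mathrm{Bin}(n-N_s,1-e^{-\lambda_\infty(t-s)})$ kernel is also identical. Where you genuinely diverge is the weak convergence of $N^\alpha$ to $N$. The paper argues through compensators: for $n=1$ it shows (after a Skorokhod-representation step for $\tau^\alpha$) a.s.\ convergence of the compensator $\frac{1}{\alpha}\int_0^{\alpha(\tau^\alpha\wedge t)}\lambda^\top X_u\,\dd u$ to $\lambda_\infty(\tau^\infty\wedge t)$ and then cites the counting-process convergence theorems of Kabanov--Liptser--Shiryaev and Jacod--Shiryaev, handling general $n$ by writing $N^\alpha$ as a sum of conditionally independent one-point processes. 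You instead use the classical two-step route: convergence of finite-dimensional distributions obtained by iterating the transition kernels just computed (the key observation, which you correctly flag, being that the limit kernels are uniform in the finitely many values of $X$ and no longer depend on it, so the chain ``forgets'' $X$), plus tightness from Aldous' criterion using the uniform intensity bound $n\max_i\lambda_i$. Both routes are sound; the paper's compensator argument is shorter because the counting-process convergence theorems bundle fidi convergence and tightness into one hypothesis, while your approach is more self-contained and makes explicit use of the Markov structure and the kernels derived in the first part, at the cost of having to spell out the iterated-conditioning step for the finite-dimensional laws.
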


\begin{proof}
We compute in the limit $\zeta^i_{t|s}=\ee[\nu^i_tX_t|\cf_s]$ and obtain from Lemma~\ref{lemma:f} 
\begin{align*}
\zeta^i_{t|s} &  = \sum_{j=0}^nF^\infty_{ij}(t-s)\zeta^j_s\\
& = \sum_{j=0}^nf^\infty_{ij}(t-s)\nu^j_s\pi\\
& = \sum_{j=0}^i {n-j \choose n-i} \exp(-(n-i)\lambda_\infty\,(t-s))(1-\exp(-\lambda_\infty\,(t-s)))^{i-j}   \nu^j_s\pi\\
& =  {n-N_s \choose n-i} \exp(-(n-i)\lambda_\infty\,(t-s))(1-\exp(-\lambda_\infty\,(t-s)))^{i-N_s}   \pi,
\end{align*}
from which the first assertion follows. 

Weak convergence can be proved in many ways. Let us first look at the case of one obligor, $n=1$. The integral in Equation~\eqref{eq:yalpha} is, with $\tau^{\alpha}=\tau^{1,\alpha}$ equal to 
\[
\frac{1}{\alpha}\int_0^{\alpha(\tau^\alpha\wedge t)}\lambda^\top X_u\,\dd u.
\]
Replacing the upper limit of the integral by $t$, this
 almost surely converges to $\lambda_\infty t$ for $\alpha\to\infty$. In fact this convergence is a.s.\ uniform.
Having already established the convergence in distribution of the $\tau^\alpha$, and by switching to an auxiliary space on which the $\tau^\alpha$ a.s.\ converge to $\tau^\infty$, we get 
\[
\frac{1}{\alpha}\int_0^{\alpha(\tau^\alpha\wedge t)}\lambda^\top X_u\,\dd u\to \int_0^{\alpha(\tau^\infty\wedge t)}\lambda^\top X_u\,\dd u.
\] 
This is sufficient, see \cite{kls} or \cite[Section~VIII.3d]{js} to conclude the weak convergence result for the case $n=1$. 

For the general case, one first notices that the process $N^\alpha$ is a sum of MM one point processes that are conditionally independent given $\cf^X$ and become independent in the limit. Combine this with the result for $n=1$. Alternatively, one could apply the results in \cite[Section~VII.3d]{js} again, although the computations will now be more involved.
\end{proof}

\subsection{Rapid switching for the MM Poisson process}

As before we replace $Q$ with $\alpha Q$ and let $\alpha\to\infty$ and denote $N^\alpha$ the corresponding counting process.
We apply Proposition~\ref{proposition:exp} to the matrix exponential $\exp\left(((e^{\ii u}-1)\diag(\lambda) + \alpha Q)(t-s)\right)$, and we find that the limit for $\alpha\to\infty$ equals $\exp((e^{\ii u}-1)\lambda_\infty(t-s))\pi\one^\top$. Hence, by virtue of \eqref{eq:phits}, we obtain $\ee[\exp(\ii uN^\alpha_t)X_t|\cf_s]$ $\to  \exp((e^{\ii u}-1)\lambda_\infty(t-s))\pi$ for the limit of the conditional characteristic function. This is just one of the many ways that eventually lead  to the conclusion that for $\alpha\to\infty$ the process $N^\alpha$ converge weakly to an ordinary Poisson process with constant intensity $\lambda_\infty$. In \cite{kls} one can find the stronger result that the variational distance between the MM law of $N^\alpha_t, t\in [0,T]$ and the limit law is of order $\alpha^{-1}$.

\end{document}